\newtheorem*{rep@theorem}{\rep@title}
\newcommand{\newreptheorem}[2]{%
\newenvironment{rep#1}[1]{%
 \def\rep@title{#2 \ref{##1}}%
 \begin{rep@theorem}}%
 {\end{rep@theorem}}}
\newtheorem*{rep@cor}{\rep@title}
\newcommand{\newrepcor}[2]{%
\newenvironment{rep#1}[1]{%
 \def\rep@title{#2 \ref{##1}}%
 \begin{rep@cor}}%
 {\end{rep@cor}}}
\newtheorem*{rep@prop}{\rep@title}
\newcommand{\newrepprop}[2]{%
\newenvironment{rep#1}[1]{%
 \def\rep@title{#2 \ref{##1}}%
 \begin{rep@prop}}%
 {\end{rep@prop}}}
\newtheorem{cor}{Corollary}[section]
\newtheorem{corx}[cor]{Corollary}
\newtheorem{theorem}[cor]{Theorem}
\newtheorem{thmx}[cor]{Theorem}
\newtheorem{prop}[cor]{Proposition}
\newtheorem{lemma}[cor]{Lemma}
\theoremstyle{definition}
\newtheorem{defi}[cor]{Definition}
\theoremstyle{remark}
\newtheorem{remark}[cor]{Remark}
\newtheorem*{remark*}{Remark}
\newtheorem{example}[cor]{Example}
\newcommand{\cD}{{\mathcal D}}
\newcommand{\cF}{{\mathcal F}}
\newcommand{\C}{{\mathbb C}}
\newcommand{\R}{{\mathbb R}}
\newcommand{\Z}{{\mathbb Z}}
\newcommand{\Hom}{\mathrm{Hom}}
\newcommand{\dR}{\mathrm{dR}}
\newcommand{\Hyp}{\mathbb{H}}
\newcommand{\AdS}{\mathbb{A}\mathrm{d}\mathbb{S}}
\newcommand{\psl}{\mathfrak{sl}}
\newcommand{\PSL}{\mathrm{PSL}}
\newcommand{\PSLR}{\mathrm{PSL}_2\R}
\newcommand{\USL}{\widetilde{\mathrm{SL}_2\R}}
\newcommand{\ML}{{\mathrm{M}\!\mathrm{L}}}
\newcommand{\TA}{T^{1}_\star\tAdS}
\newcommand{\hol}{\mathrm{hol}}
\newcommand{\isom}{\mathrm{Isom}}
\newcommand{\tAdS}{\widetilde{\AdS^3}}
\newcommand{\RP}{\R \mathrm{P}}
\newcommand{\Flux}{\mathrm{Flux}}
\newcommand{\CFlux}{\widehat{\mathrm{Flux}}}
\newcommand{\Symp}{\mathrm{Symp}}
\newcommand{\Ham}{\mathrm{Ham}}
\begin{document}

\setcounter{secnumdepth}{3}
\setcounter{tocdepth}{2}

\title[Equivariant maps into $\AdS^3$ and the symplectic geometry of $\mathbb H^2\times \mathbb H^2$]{Equivariant maps into Anti-de Sitter space and the symplectic geometry of $\mathbb H^2\times \mathbb H^2$}

\author[Francesco Bonsante]{Francesco Bonsante}
\address{Francesco Bonsante: Dipartimento di Matematica ``Felice Casorati", Universit\`{a} degli Studi di Pavia, Via Ferrata 5, 27100, Pavia, Italy.} \email{bonfra07@unipv.it} 
\author[Andrea Seppi]{Andrea Seppi}
\address{Andrea Seppi: Dipartimento di Matematica ``Felice Casorati", Universit\`{a} degli Studi di Pavia, Via Ferrata 5, 27100, Pavia, Italy.} \email{andrea.seppi01@ateneopv.it}

\thanks{The authors were partially supported by FIRB 2010 project ``Low dimensional geometry and topology'' (RBFR10GHHH003) and by
PRIN 2012 project ``Moduli strutture algebriche e loro applicazioni''.
The authors are members of the national research group GNSAGA}

\begin{abstract}
Given two Fuchsian representations $\rho_l$ and $\rho_r$ of the fundamental group of a closed oriented surface $S$ of genus $\geq 2$, we study the relation between Lagrangian submanifolds of $M_\rho=(\mathbb{H}^2/\rho_l(\pi_1(S)))\times (\mathbb{H}^2/\rho_r(\pi_1(S)))$ and $\rho$-equivariant embeddings $\sigma$ of $\widetilde S$ into Anti-de Sitter space, where $\rho=(\rho_l,\rho_r)$ is the corresponding representation into $\mathrm{PSL}_2\mathbb R\times \mathrm{PSL}_2\mathbb R$. It is known that, if $\sigma$ is a maximal embedding, then its Gauss map takes values in the unique minimal Lagrangian submanifold $\Lambda_{\mathrm{ML}}$ of $M_\rho$.

We show that, given any $\rho$-equivariant embedding $\sigma$, its Gauss map gives a Lagrangian submanifold Hamiltonian isotopic to $\Lambda_{\mathrm{ML}}$. Conversely, any Lagrangian submanifold Hamiltonian isotopic to $\Lambda_{\mathrm{ML}}$ is associated to some equivariant embedding into the future unit tangent bundle of the universal cover of Anti-de Sitter space.
\end{abstract}

\maketitle

\section{Introduction}

Anti-de Sitter space of dimension three, denoted $\AdS^3$, is the Lie group $\PSL_2\R$ endowed with (a multiple of) the Killing form. It is a Lorentzian manifold of constant negative sectional curvature, and the identity component of its isometry group is naturally isomorphic to $\PSL_2\R\times \PSL_2\R$. Its study, from the purely mathematical viewpoint, was initiated by the work of Mess \cite{Mess}, and has remarkably spread since then. One of the reasons for this interest, which is the main motivation behind this paper, is the relation between equivariant embedded surfaces in Anti-de Sitter space  and hyperbolic geometry in dimension 2. 

Let $S$ be a closed oriented surface of negative Euler characteristic.
We will consider $\rho$-equivariant spacelike (i.e. having Riemannian induced metric) embeddings of the universal cover $\widetilde S$ of $S$ into $\AdS^3$, where $\rho=(\rho_l,\rho_r):\pi_1(S)\to \PSL_2\R\times \PSL_2\R$ is a representation of the fundamental group of $S$. Mess proved that in this case, $\rho_l$ and $\rho_r$ are Fuchsian representations.
 Hence (identifying $\PSL_2\R$ with the isometry group of the hyperbolic plane) $\Hyp^2/\rho_l(\pi_1(S))$ and $\Hyp^2/\rho_r(\pi_1(S))$
are closed hyperbolic surfaces, which we identify with $(S,h_l)$ and $(S,h_r)$, for $h_l$ and $h_r$ hyperbolic metrics on $S$. 

Mess considered the case of \emph{pleated} immersions $\sigma:\widetilde S\to\AdS^3$. In this case, there is a well-defined hyperbolic metric $h_0$ on $S$ induced by $\sigma$, and by a rather explicit geometric construction, Mess defined two maps $e_\lambda^l:(S,h_0)\to (S,h_l)$ and $e_\lambda^r:(S,h_0)\to (S,h_r)$ which turn out to be left and right \emph{earthquake maps}, where the the bending lamination $\lambda$ of $\sigma$ coincides with the earthquake lamination. This construction gave a new interpretation to Thurston's proof of the existence of earthquake maps between two fixed closed hyperbolic surfaces homeomorphic to $S$. 

Later, Krasnov and Schlenker in \cite{Schlenker-Krasnov} studied $\rho$-equivariant embeddings $\sigma:\widetilde S\to\AdS^3$ under the condition that the induced metric has negative curvature. In this case, one can define two diffeomorphisms $\varphi^l:S\to (S,h_l)$ and $\varphi^r:S\to (S,h_r)$, which are somehow the smooth analogues of the earthquake maps above. The composition $\varphi:=\varphi^r\circ (\varphi^l)^{-1}$ turns out to be an area-preserving diffeomorphism between $(S,h_l)$ and $(S,h_r)$. 
In other words, the graph of $\varphi$ is a Lagrangian submanifold of $(S\times S,\Omega_\rho)$, where $\Omega_\rho$ is the natural symplectic form:
$$\Omega_\rho=p_l^*\Omega_l-p_r^*\Omega_r~,$$
if $\Omega_l$ and $\Omega_r$ are the area forms of the hyperbolic metric $h_l$ and $h_r$.

A remarkable application of this construction comes from the particular case of \emph{maximal}
equivariant embeddings (i.e. of vanishing mean curvature). This case is characterized by the fact that the associated area-preserving diffeomorphism $\varphi$ is \emph{minimal Lagrangian} (\cite{Schlenker-Krasnov}, see also \cite{bon_schl} and \cite{torralbo}). This means that the Lagrangian submanifold ${graph}(\varphi)$ is minimal in $(S\times S,h_l\oplus h_r)$, see also \cite{labourieCP} and \cite{Schoenharmonic}. 

Other applications were given in \cite{bbzads,bonseppitamb,Schlenker-Krasnov} for the case of closed surfaces, \cite{bon_schl,ksurfaces,seppimaximal,seppiminimal,scarinci} for generalizations to maps from the hyperbolic plane $\Hyp^2$ to itself in the context of universal Teichm\"uller space, and \cite{bms,bms2,ksurfaces} for a more general class of maps called \emph{landslides}.

The above characterization of minimal Lagrangian maps highlights the fact, which is behind the main ideas of this paper, that the most relevant object to associate to $\sigma$ is a Lagrangian submanifold of $(S\times S,\Omega_\rho)$, rather than the diffeomorphism $\varphi$. 
The construction of the diffeomorphism $\varphi$ can be generalized to any $\rho$-equivariant spacelike embedding $\sigma:\widetilde S\to\AdS^3$ (without any assumption on the curvature), thus providing a smooth Lagrangian submanifold of the symplectic manifold $(S\times S,\Omega_\rho)$, which we will denote $\Lambda_\sigma$. This has been observed in \cite{barbotkleinian}. In this paper, we provide an independent proof in a framework which is better adapted for our aims.

The problem we address in this paper is to what extent this construction can be reversed, that is: is it possible to associate to a Lagrangian submanifold a $\rho$-equivariant spacelike embedding? In \cite[§3]{barbotkleinian} it was showed that the unique \emph{local} obstruction to inverting the construction is given by the condition of being a Lagrangian submanifold. Hence it remains to understand the \emph{global} problem, and we will give an obstruction purely in terms of the symplectic geometry of $(S\times S,\Omega_\rho)$. This will be the content of Corollary \ref{main cor}. Corollary \ref{main cor} is actually a consequence of our main result (Theorem \ref{main thm}), which also shows that this obstruction is complete in a slightly more general context.

\subsection*{Statement of the main result}

The aforementioned obstruction will be given in terms of \emph{Hamiltonian submanifolds} and \emph{Hamiltonian symplectomorphisms}. Let us briefly recall these notions.
If we fix a representation $\rho=(\rho_l,\rho_r):\pi_1(S)\to\PSL_2\R\times \PSL_2\R$ with $\rho_l$ and $\rho_r$ Fuchsian, let $\Phi_t:(S\times S,\Omega_\rho)\to (S\times S,\Omega_\rho)$ be a smooth path of symplectomorphisms with $\Phi_0=\mathrm{id}$. The path $\Phi_t$ is called \emph{Hamiltonian} if there exists a smooth family of functions $H_t:S\times S\to\R$ such that the generating vector field of $\Phi_t$ is the symplectic gradient of $H_t$. Then a symplectomorphism $\Phi:(S\times S,\Omega_\rho)\to (S\times S,\Omega_\rho)$ is \emph{Hamiltonian} if there exists a Hamiltonian path $\Phi_t$ such that $\Phi_0=\mathrm{id}$ and $\Phi_1=\Phi$.
It turns out that Hamiltonian symplectomorphisms form a normal subgroup $\Ham(S\times S,\Omega_\rho)$ of $\Symp_0(S\times S,\Omega_\rho)$ (the identity component of $\Symp(S\times S,\Omega_\rho)$). See also the discussion below on the flux homomorphism.

The group $\Symp_0(S\times S,\Omega_\rho)$ clearly acts on the space of smooth Lagrangian submanifolds of $(S\times S,\Omega_\rho)$ isotopic to the diagonal. A consequence of the main result in this paper --- namely Theorem \ref{main thm} --- is that the submanifolds $\Lambda_\sigma$ associated to smooth $\rho$-equivariant spacelike embeddings $\sigma:\widetilde S\to\AdS^3$ lie in a single orbit of the action of $\Ham(S\times S,\Omega_\rho)$. Since we already know that the unique minimal Lagrangian submanifold $\Lambda_\ML$ of $(S\times S,\Omega_\rho)$ isotopic to the diagonal is the submanifold associated to the maximal $\rho$-equivariant embedding, we obtain:

\begin{corx} \label{main cor}
Let $\rho=(\rho_l,\rho_r):\pi_1(S)\to\PSL_2\R\times \PSL_2\R$, where $\rho_l$ and $\rho_r$ are Fuchsian representations. Then for every $\rho$-equivariant spacelike embedding $\sigma:\widetilde S\to\AdS^3$, $\Lambda_\sigma$ is Hamiltonian isotopic to the unique minimal Lagrangian submanifold $\Lambda_\ML$ isotopic to the diagonal.
\end{corx}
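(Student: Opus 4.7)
The plan is to deduce the corollary directly from Theorem \ref{main thm} by exhibiting one distinguished $\rho$-equivariant spacelike embedding whose associated Lagrangian submanifold is precisely $\Lambda_{\ML}$. The theorem will then force every other $\Lambda_\sigma$ to lie in the same $\Ham(S\times S,\Omega_\rho)$-orbit.

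The distinguished embedding is the \emph{maximal} $\rho$-equivariant spacelike embedding $\sigma_{\max}:\wti S\to\AdS^3$, whose existence and uniqueness, for any pair of Fuchsian representations $(\rho_l,\rho_r)$, is a known result (\cite{Schlenker-Krasnov,bon_schl}). By the characterization recalled in the introduction, the associated diffeomorphism $\varphi_{\max}:(S,h_l)\to(S,h_r)$ is the minimal Lagrangian diffeomorphism, and therefore $\Lambda_{\sigma_{\max}}=\mathrm{graph}(\varphi_{\max})=\Lambda_{\ML}$.

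To apply Theorem \ref{main thm} one needs to know that both $\Lambda_\sigma$ and $\Lambda_{\sigma_{\max}}$ lie in the isotopy class of the diagonal of $S\times S$. From the construction $\sigma\mapsto\Lambda_\sigma$ developed earlier in the paper, $\Lambda_\sigma$ is the graph of a diffeomorphism $\varphi=\varphi^r\circ(\varphi^l)^{-1}$ between $(S,h_l)$ and $(S,h_r)$; interpolating between $\sigma$ and $\sigma_{\max}$ through a smooth family of $\rho$-equivariant spacelike embeddings yields a continuous family of such diffeomorphisms, all mutually isotopic. Since $\varphi_{\max}$ is isotopic to the identity, so is $\varphi$.

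Theorem \ref{main thm} applied to the pair $\{\sigma,\sigma_{\max}\}$ then yields a Hamiltonian symplectomorphism of $(S\times S,\Omega_\rho)$ sending $\Lambda_\sigma$ to $\Lambda_{\sigma_{\max}}=\Lambda_{\ML}$, which is exactly the content of the corollary. The real obstacle is not in this deduction, which is essentially immediate once Theorem \ref{main thm} is available, but rather in the theorem itself: one must show that the natural flux cohomology class of the Lagrangian isotopy $t\mapsto\Lambda_{\sigma_t}$ vanishes in $H^1(\Lambda_{\ML};\R)$, which is precisely what upgrades a symplectic (or merely Lagrangian) isotopy to a Hamiltonian one.
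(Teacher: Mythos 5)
You have correctly identified the intended strategy (deduce the corollary from Theorem \ref{main thm}), but your deduction has a genuine gap: Theorem \ref{main thm} is a statement about $\widetilde\rho$-equivariant embeddings $\widetilde\sigma:\widetilde S\to\TA$ orthogonal to the orbits of the geodesic flow, while the corollary concerns $\rho$-equivariant spacelike embeddings $\sigma:\widetilde S\to\AdS^3$, and you never supply the bridge between the two. That bridge is the single substantive step of the paper's proof: lift $\sigma$ to $\tAdS$ (the lift is equivariant for the standard lift $\widetilde\rho$ by Lemma \ref{lemma lifting}) and take the normal section $\widetilde\sigma_N=(\widetilde\sigma,N)$; Lemma \ref{lemma lift embedding hor} shows that $\widetilde\sigma_N$ is a $\widetilde\rho$-equivariant embedding into $\TA$ orthogonal to the geodesic flow orbits, and by construction $\Lambda_{\widetilde\sigma_N}=\Lambda_\sigma$. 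With this in hand the corollary follows at once from the inclusion ``$\subseteq$'' of Theorem \ref{main thm}, with no further input. Without it, $\Lambda_\sigma$ has not been shown to belong to the set on the left-hand side of the theorem, so the theorem cannot be applied to $\sigma$ at all.

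The substitute arguments you offer do not close this gap and contain an error. You assert that $\Lambda_\sigma$ is the graph of the diffeomorphism $\varphi^r\circ(\varphi^l)^{-1}$; this is the Krasnov--Schlenker picture, and the paper is explicit that it holds only when the metric induced by $\sigma$ has negative curvature --- for a general spacelike embedding $\Lambda_\sigma$ need not be transverse to the product structure of $S\times S$ and hence need not be a graph, so your third paragraph starts from a false premise. The interpolation of $\sigma$ to $\sigma_{\max}$ through a smooth family of $\rho$-equivariant spacelike embeddings is asserted without proof (connectedness of that space is a nontrivial claim the paper never uses), and it is also unnecessary: the right-hand side $\Ham(S\times S,\Omega_\rho)\cdot\Lambda_\ML$ of the theorem already consists of submanifolds isotopic to the diagonal, so the isotopy class of $\Lambda_\sigma$ is a \emph{consequence} of the theorem, not a hypothesis to be verified beforehand. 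Finally, exhibiting the maximal embedding $\sigma_{\max}$ with $\Lambda_{\sigma_{\max}}=\Lambda_\ML$ is redundant at this stage: that identification is consumed inside the proof of Theorem \ref{main thm} itself (it is what makes the holonomy of $\Lambda_\ML^*P_\rho$ trivial), and the corollary needs only the one inclusion of the theorem together with the normal-section lemma. You do, however, correctly locate where the real work of the paper lies, namely in showing that the flux of the Lagrangian isotopy vanishes.
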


Two Lagrangian submanifolds $\Lambda_0,\Lambda_1$ are called \emph{Hamiltonian isotopic} if there exists a Hamiltonian symplectomorphism $\Phi\in\Ham(S\times S,\Omega_\rho)$ such that $\Phi(\Lambda_0)=\Lambda_1$. 

Before actually stating the main theorem of the paper (Theorem \ref{main thm}), of which Corollary \ref{main cor} will be a consequence, let us explain a geometric interpretation of the submanifold $\Lambda_\sigma$ associated to $\sigma:\widetilde S\to\AdS^3$.

The basic observation is that there is a natural projection $\pi:\TA\to \Hyp^2\times \Hyp^2$, where $\TA$ is the bundle of future-directed unit timelike vectors in the tangent bundle of the universal cover $\tAdS$ of $\AdS^3$. This projection $\pi$ has the important property that its fibers are the orbits of the geodesic flow on $\TA$. Now, given a $\rho$-equivariant spacelike embedding $\sigma:\widetilde S\to\AdS^3$, we can consider the normal section $\widetilde\sigma_N:\widetilde S\to\TA$ of any lift $\widetilde\sigma:\widetilde S\to\tAdS$. It turns out that the composition $\pi\circ\widetilde\sigma_N$ does not depend on the chosen lift $\widetilde\sigma$ and provides a $\rho$-equivariant Lagrangian embedding of $\widetilde S$ into $\Hyp^2\times \Hyp^2$. Thus  $\pi\circ\widetilde\sigma_N$ induces an embedding of $S$ into $(S\times S,\Omega_\rho)$, whose image is the Lagrangian submanifold $\Lambda_\sigma$ we have already mentioned. If the curvature of the metric induced by the embedding $\sigma$ is negative, then $\Lambda_\sigma$ is transverse to the product structure of $S\times S$ and thus it is globally a graph, hence recovering the construction of Krasnov and Schlenker.

It turns out that the normal section $\widetilde\sigma_N$ is orthogonal to the orbits of the geodesic flow for the Sasaki metric of $\TA$ and is equivariant for a uniquely determined lift $\widetilde\rho:\pi_1(S)\to\isom_0(\tAdS)$, which we will call \emph{standard lift}. Such a lift $\widetilde \rho$ only depends on $\rho$, and not on $\sigma$ or on the choice of its lift. Conversely, a $\widetilde\rho$-equivariant embedding $\widetilde\sigma:\widetilde S\to \TA$ is the normal section of some embedding $\sigma:\widetilde S\to\AdS^3$ if and only if:
\begin{enumerate}
\item $\widetilde\sigma$ is orthogonal to the orbits of the geodesic flow for the Sasaki metric on $\TA$;
\item $\widetilde\sigma$ is transverse to the fibers of the projection $\TA\to\AdS^3$.
\end{enumerate}
Our main theorem is a complete characterization of the Lagrangian submanifolds 
$$\Lambda_{\widetilde\sigma}=\textrm{image}(\pi\circ \widetilde\sigma)/\rho(\pi_1(S)) \subset(S\times S,\Omega_\rho)$$ induced by the $\widetilde\rho$-equivariant embedding $\widetilde \sigma$ satisfying condition $(1)$ above.

\begin{thmx} \label{main thm}
Let $\rho=(\rho_l,\rho_r):\pi_1(S)\to\PSL_2\R\times \PSL_2\R$, where $\rho_l$ and $\rho_r$ are Fuchsian representations, and let $\widetilde\rho:\pi_1(S)\to\isom_0(\tAdS)$ be its standard lift. Then
$$\left\{\Lambda_{\widetilde\sigma}:\begin{aligned} \widetilde\sigma &\text{ is a }\widetilde\rho\text{-equivariant embedding orthogonal} \\
&\text{to the orbits of the geodesic flow} \end{aligned} \right\}=\Ham(S\times S,\Omega_\rho)\cdot\Lambda_\ML~,$$
where $\Lambda_\ML$ is the unique minimal Lagrangian submanifold of $(S\times S,\Omega_\rho)$ isotopic to the diagonal.
\end{thmx}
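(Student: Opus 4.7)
The plan is to reduce both inclusions to a single infinitesimal computation exploiting a natural primitive of $\pi^*\Omega_\rho$ on $\TA$. Let $T$ be the future-directed unit vector field generating the geodesic flow on $\TA$, and let $\alpha$ be the 1-form metrically dual to $T$ for the Sasaki metric. By naturality, $\alpha$ is invariant under the lifted $\isom_0(\tAdS)$-action, so everything that follows descends properly under $\widetilde\rho$. The key preliminary identity I would verify is
$$d\alpha=\pi^*\Omega_\rho,$$
which I expect to follow from a direct computation using the identification $\tAdS=\USL$, with the two components of $\pi$ coming from the left- and right-invariant Maurer--Cartan forms. In particular, $\widetilde\sigma$ is orthogonal to the geodesic flow if and only if $\widetilde\sigma^*\alpha=0$, which immediately yields $\widetilde\sigma^*\pi^*\Omega_\rho=0$ and re-proves that $\Lambda_{\widetilde\sigma}$ is Lagrangian.

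\textbf{Inclusion $\subseteq$.} Fix $\widetilde\sigma_0$ to be the normal section of a maximal $\rho$-equivariant embedding, so that $\Lambda_{\widetilde\sigma_0}=\Lambda_\ML$, and let $\widetilde\sigma_1$ be any given $\widetilde\rho$-equivariant horizontal embedding. Join them by a smooth path $\widetilde\sigma_t$ of $\widetilde\rho$-equivariant horizontal embeddings; such a path exists by first sliding $\widetilde\sigma_1$ close to $\widetilde\sigma_0$ along the flow of $T$ and then interpolating through nearby Lagrangian sections in a Weinstein neighborhood of $\Lambda_\ML$. Setting $F(x,t)=\widetilde\sigma_t(x)$, horizontality at each fixed $t$ forces $F^*\alpha=g(x,t)\,dt$, where $g$ is $\pi_1(S)$-invariant and so descends to $S\times[0,1]$. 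Therefore
$$(\pi\circ F)^*\Omega_\rho=F^*d\alpha=d_{\widetilde S}g\wedge dt,$$
so if $X_t$ denotes the variation vector field of $\Lambda_t$, the 1-form $\iota_{X_t}\Omega_\rho|_{\Lambda_t}$ equals the exact form $d_{\Lambda_t}g(\cdot,t)$. By the standard criterion for Hamiltonian isotopies of Lagrangian submanifolds, $\Lambda_t$ extends to a Hamiltonian isotopy of $(S\times S,\Omega_\rho)$, obtained by extending $g(\cdot,t)$ from a tubular neighborhood of $\Lambda_t$ to a global Hamiltonian $H_t$ via a cutoff.

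\textbf{Inclusion $\supseteq$.} Conversely, let $\Phi_t$ be a Hamiltonian isotopy of $(S\times S,\Omega_\rho)$ with $\Phi_0=\id$ and Hamiltonians $H_t$, and set $\Lambda_t=\Phi_t(\Lambda_\ML)$. Construct $\widetilde\sigma_t$ by prescribing its time-derivative to be the horizontal lift of the Hamiltonian vector field of $H_t$ restricted to $\Lambda_t$, plus a vertical correction $c_t\,T$, where $c_t$ is uniquely determined pointwise by the horizontality condition $\widetilde\sigma_t^*\alpha=0$; this is a linear first-order prescription that always admits a smooth $\widetilde\rho$-equivariant solution starting from $\widetilde\sigma_0$. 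Evaluating at $t=1$ produces a horizontal embedding $\widetilde\sigma_1$ with $\Lambda_{\widetilde\sigma_1}=\Phi_1(\Lambda_\ML)$.

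\textbf{Main obstacle.} The principal technical step is the identity $d\alpha=\pi^*\Omega_\rho$, together with verifying that $\ker\alpha$ is precisely the horizontal distribution implicit in the statement; this ties the Sasaki geometry of $\TA$ to the product hyperbolic geometry of the base through the left--right projection $\pi$ and requires careful sign tracking consistent with $\Omega_\rho=p_l^*\Omega_l-p_r^*\Omega_r$. A secondary subtlety is checking that the constructions above preserve equivariance and the embedding property, for which the Hamiltonian invariance of embeddedness of $\Lambda_t$ and the properness of the horizontal lift along geodesic fibers are both used.
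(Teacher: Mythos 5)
Your inclusion $\supseteq$ is essentially sound, but your inclusion $\subseteq$ has a genuine gap at its very first step: the existence of a smooth path of $\widetilde\rho$-equivariant \emph{horizontal} embeddings $\widetilde\sigma_t$ joining $\widetilde\sigma_1$ to the normal lift of the maximal surface. Your proposed construction cannot produce it. Sliding $\widetilde\sigma_1$ along the geodesic flow does not move the projected Lagrangian at all --- the fibers of $\pi:\TA\to\Hyp^2\times\Hyp^2$ are exactly the flow orbits, so $\pi\circ\varphi_t\circ\widetilde\sigma_1=\pi\circ\widetilde\sigma_1$ for all $t$ --- hence it can never bring $\Lambda_{\widetilde\sigma_1}$ into a Weinstein neighborhood of $\Lambda_\ML$ when the two Lagrangians are far apart, and the interpolation step is unavailable. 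Worse, the existence of such a horizontal path is essentially equivalent to the theorem itself: a Lagrangian admits an equivariant horizontal lift precisely when the flat bundle $\Lambda^*P_\rho$ has trivial holonomy, and along an arbitrary Lagrangian isotopy from $\Lambda_{\widetilde\sigma_1}$ to $\Lambda_\ML$ the intermediate Lagrangians will in general have nontrivial holonomy, so one cannot lift the path time-by-time. The paper circumvents exactly this point with the flux machinery: it connects the endpoints through an \emph{arbitrary} Lagrangian path, proves that the holonomies of the endpoint bundles differ by $\Flux([\Lambda_\bullet])$ (Proposition \ref{prop holonomy flux}), concludes $\Flux([\Lambda_\bullet])=0$ since both endpoints have trivial holonomy, and then invokes Corollary \ref{cor flux vanish hamiltonian} (from \cite{MR3124936}) to convert vanishing flux into membership in the $\Ham(S\times S,\Omega_\rho)$-orbit. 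That last implication is a nontrivial global statement about the symplectic geometry of $(S\times S,\Omega_\rho)$, and your local Weinstein-neighborhood argument is not a substitute for it. (Granting the horizontal path, your computation $F^*\alpha=g\,dt$, hence $\iota_{X_t}\Omega_\rho|_{\Lambda_t}$ exact, hence Hamiltonian, is correct in spirit --- it amounts to re-deriving that the flux vanishes along a horizontal path.)

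Two further remarks. First, your $\supseteq$ direction is a legitimately different and attractive route: it is the prequantum (Kostant--Souriau) lift of a Hamiltonian isotopy to the connection bundle, constructing the parallel section over $\Phi_1(\Lambda_\ML)$ explicitly by an ODE, whereas the paper argues abstractly (flux vanishes along the Hamiltonian path, so the holonomy of $\Lambda^*P_\rho$ stays trivial, so a parallel global section exists). However, your phrase that $c_t$ is ``uniquely determined pointwise by horizontality'' is off: the vertical correction enters the horizontality of the moving section only through $dc_t$, and the correct choice is $c_t$ proportional to $-H_t$ restricted along the section (using the globally defined Hamiltonian), not a pointwise algebraic solve. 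Second, your key identity is stated with the wrong constant: with the paper's normalizations the curvature of the connection form is $d\omega_\rho=\frac{1}{2}\pi_\rho^*\Omega_\rho$ (Proposition \ref{prop curvature}), not $\pi^*\Omega_\rho$; this factor of $2$ (and the sign convention $\omega=-g_S(\chi,\cdot)$) is harmless for the qualitative statements but must be tracked in the Hamiltonian correction $c_t$.
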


Corollary \ref{main cor} follows from one of the two inclusions of Theorem \ref{main thm}, by taking the normal section $\widetilde \sigma_N$ of a $\rho$-equivariant embedding $\sigma$, and observing that by construction $\Lambda_{\widetilde\sigma}=\Lambda_\sigma$.

\subsection*{Organization and techniques involved}
Let us now highlight the main tools which are used in the proof of Theorem \ref{main thm}, and how they are organized in the paper.
\\

In Section \ref{sec preliminaries}, we introduce Anti-de Sitter space, its universal cover, its future unit tangent bundle, and their geometric properties. The proof of Theorem \ref{main thm} will then be based on a translation of the problem into the language of \emph{principal} $\R$\emph{-bundles} over $(S\times S,\Omega_\rho)$, and in studying the \emph{symplectic geometry} of the base $(S\times S,\Omega_\rho)$.
\\

Hence the main purpose of Section \ref{sec principal bundles} is the construction of a principal $\R$-bundle $P_\rho$ over $(S\times S,\Omega_\rho)$, and the study of its geometry. This is simply obtained as the quotient of the principal $\R$-bundle 
$$\pi:\TA\to\Hyp^2\times \Hyp^2~,$$ 
where the $\R$-action on $\TA$ is given by geodesic flow. The actions of $\widetilde \rho(\pi_1(S))$ on $\TA$ and of $\rho(\pi_1(S))$ on $\Hyp^2\times \Hyp^2$ are free and properly discontinous, and $\pi$ is equivariant with respect to these actions. Hence, taking the quotient, we have a principal $\R$-bundle
$$\pi_\rho:P_\rho \to (S, h_l)\times (S,h_r)~.$$
The key observation here is that there is a natural principal $\R$-connection $\omega_\rho$ on $P_\rho$, for which parallel (local) sections are precisely sections orthogonal to the orbits of the geodesic flow. Hence $\widetilde\rho$-equivariant embeddings $\widetilde \sigma:\widetilde S\to\TA$ correspond precisely to parallel  sections of $\pi_\rho$ over $\Lambda_{\widetilde\sigma}\subset (S\times S,\Omega_\rho)$.

The main result of Section \ref{sec principal bundles} is then Proposition \ref{prop curvature}, which shows that the curvature of $(P_\rho,\omega_\rho)$ --- which is a $\R$-valued 2-form $R_\rho\in\Omega^2(P_\rho,\R)$ --- is:
$$R_\rho=\frac{1}{2}\pi_\rho^*\Omega_\rho~.$$
Hereafter, let us denote by $\Lambda:S\to (S\times S,\Omega_\rho)$ an embedding, rather than the embedded surface as above. Hence we deduce that $\Lambda$ is a Lagrangian embedding if and only if the bundle $\Lambda^*P_\rho$ is a flat $\R$-bundle, for the connection induced by $\omega_\rho$. In particular we recover the fact that, when the embedding $\Lambda_\sigma$ associated to $\sigma:\widetilde S\to\AdS^3$ is the graph of some diffeomorphism $\varphi:(S,h_l)\to (S,h_r)$ (that happens when the metric induced by $\sigma$ has negative curvature), then $\varphi$ is a symplectomorphism (i.e. it is area-preserving).
\\

As a consequence, we are interested in a characterization of those Lagrangian embeddings $\Lambda$ such that the flat $\R$-bundle $\Lambda^*P_\rho$ admits a global parallel section. Equivalently, we need to characterize the condition that the holonomy of $\Lambda^*P_\rho$ is trivial.
In Section \ref{sec holonomy} we solve this step by studying the symplectic geometry of $(S\times S,\Omega_\rho)$. 

The main tool we use here is an adaptation (given in \cite{MR3124936}) to the context of Lagrangian submanifolds of the \emph{flux homomorphism} introduced by Calabi in \cite{MR0350776}, see also \cite{MR490874} and \cite[Chapter 6]{MR1698616}. The latter is a surjective group homomorphism 
$$\CFlux:\widetilde \Symp_0(S\times S,\Omega_\rho)\to H^1_\dR(S\times S,\R)~,$$
whose kernel is $\widetilde \Ham(S\times S,\Omega_\rho)$. Then in \cite{MR3124936}, a generalization was given, as a map $\Flux$ which associates to a smooth path of Lagrangian embeddings $\Lambda_t$ an element in $\Hom(\pi_1(S),\R)$. In our case, this only depends on the endpoints $\Lambda_0$ and $\Lambda_1$, and if $\Lambda_1=\Phi_1(\Lambda_0)$ for some path $\Phi_t\in\Symp_0(S\times S,\Omega_\rho)$ with $\Phi_0=\mathrm{id}$, then $\Flux([\Lambda_\bullet])$ coincides with $\Lambda_0^*\CFlux([\Phi_t])$ under the usual identification $H^1_\dR(S,\R)\cong\Hom(\pi_1(S),\R)$.

The most important step of Section \ref{sec holonomy} is Proposition \ref{prop holonomy flux}. It shows that, if $\hol_F$ denotes the holonomy representation of a flat $\R$-bundle $F$ over $S$, then 
$$\hol_{\Lambda_1^*P_\rho}=\hol_{\Lambda_0^*P_\rho}+\Flux([\Lambda_\bullet])~.$$
Hence the flat subbundles $\Lambda^*P_\rho$ which have the same holonomy form precisely an orbit of the action of $\Ham(S\times S,\Omega_\rho)$ on the space of Lagrangian embeddings of $S$ into $(S\times S,\Omega_\rho)$. 
\\

Section \ref{sec proof} then puts together the above ingredients to prove Theorem \ref{main thm}. In particular, since we already know that the minimal Lagrangian embedding $\Lambda_\ML$ is associated to the $\rho$-equivariant maximal embedding, $\Lambda_\ML^* P_\rho$ is a trivial flat $\R$-bundle (i.e. it admits a parallel global section). Hence by the above results, for an embedding $\Lambda:S\to (S\times S,\Omega_\rho)$, $\Lambda^*P_\rho$ is a trivial flat bundle is and only if $\Lambda$ is Lagrangian \emph{and} the map $\Flux$ vanishes on the class of a path $\Lambda_t$ such that $\Lambda_0=\Lambda_\ML$ and $\Lambda_1=\Lambda$. This is equivalent to the fact that $\Lambda$ is in the $\Ham(S\times S,\Omega_\rho)$-orbit of $\Lambda_\ML$.

\subsection*{Acknowledgements}
We would like to thank Alessandro Ghigi for several explanations about symplectic geometry, in particular for pointing out the relevant notion of flux homomorphism. Moreover, we would like to thank Thierry Barbot for interesting discussions and for pointing out relevant references.



\section{Anti-de Sitter geometry} \label{sec preliminaries}


In this section we introduce the geometry of Anti-de Sitter space of dimension three, of its universal cover $\tAdS$, and of its future unit timelike tangent bundle $\TA$. We prove a simple --- though essential in the following --- lifting statement (Lemma \ref{lemma lifting}) which defines the \emph{standard lift} of a representation $\rho=(\rho_l,\rho_r):\pi_1(S)\to\PSL_2\R\times \PSL_2\R$, with $\rho_l$ and $\rho_r$ Fuchsian.

\subsection{Anti-de Sitter space}
Let us consider the Killing form $\kappa$ on the Lie group $\PSLR$, which is a bi-invariant bilinear form on the Lie algebra $\psl_2\R$ of signature $(2,1)$. The bilinear form $\kappa$ induces a Lorentzian metric on $\PSLR$, which we denote by $g_\kappa$. Then we define Anti-de Sitter space of dimension 3 as:
$$\AdS^3:=\left(\PSLR, \frac{1}{8}g_\kappa\right)~.$$
Hence $\AdS^3$ is a Lorentzian manifold, orientable and time-orientable, topologically a solid torus, of constant sectional curvature. Due to the normalization factor $1/8$, the sectional curvature is $-1$ (\cite[Lemma 2.1]{ksurfaces}). 
By construction, the identity component of the isometry group of $\AdS^3$ is:
$$\isom_0(\AdS^3)\cong \PSLR\times\PSLR~,$$
where a pair $(\alpha,\beta)\in\PSLR\times\PSLR$ acts on $\AdS^3$ by
\begin{equation} \label{eq isometry action}
(\alpha,\beta)\cdot \gamma=\alpha\circ\gamma\circ \beta^{-1}~.
\end{equation}

Recall that $\PSLR$ is identified to the group of orientation-preserving isometries of the hyperbolic plane, in the upper-half plane model:
$$\Hyp^2:=\left(\{z\in\C\,\mid\,\Im(z)>0\},\frac{|dz|^2}{\Im(z)^2}\right)~.$$
The identification of $\isom_0(\Hyp^2)$ with $\PSLR$ is defined by associating to an isometry of $\Hyp^2$ its extension to the visual boundary $\partial_\infty\Hyp^2=\RP^1$, which is a projective transformation.

A differentiable curve $\gamma:I\to\AdS^3$ is \emph{timelike} if $g_\kappa(\dot\gamma,\dot\gamma)<0$ at every point $\gamma(t)$, \emph{spacelike} if $g_\kappa(\dot\gamma,\dot\gamma)>0$, and \emph{lightlike} if $g_\kappa(\dot\gamma,\dot\gamma)=0$. We claim that every timelike geodesic in $\AdS^3$ has the form:
\begin{equation} \label{eq closed geodesic}
L_{x,y}:=\{\gamma\in\PSLR\,\mid\,\gamma(y)=x\}~,
\end{equation}
for $x,y\in\Hyp^2$.
Indeed, $L_{x_0,x_0}$ is a closed timelike geodesic for every $x_0\in\Hyp^2$, since it is a maximal compact subgroup, hence the induced bilinear form is negative definite. It is a geodesic since it is a 1-parameter group, and the Riemannian exponential map coincides with the Lie group exponential map as $g_\kappa$ is a bi-invariant metric. 
It also turns out that it has length $\pi$, the arclength parameter being $1/2$ the angle of rotation of elliptic elements fixing $x_0$.

It can be easily checked that 
\begin{equation} \label{eq transformation geodesics}
(\alpha,\beta)\cdot L_{x,y}=L_{\alpha(x),\beta(y)}~,
\end{equation}
and this also shows that every timelike geodesic of $\AdS^3$ is of the form $L_{x,y}$ for some $x,y$, since it is the image of $L_{x_0,x_0}$ under some isometry $(\alpha,\beta)$ of $\AdS^3$.
Hence there is a 1-1 correspondence
\begin{equation} \label{eq bij}
\Hyp^2\times\Hyp^2\leftrightarrow\{\text{timelike geodesics in }\AdS^3\}~,
\end{equation}
defined by 
$(x,y)\mapsto L_{x,y}$.
As a consequence of Equation \eqref{eq transformation geodesics}, the bijection of Equation \eqref{eq bij} is equivariant with respect to the action of $\PSLR\times\PSLR$ on $\Hyp^2\times\Hyp^2$ by isometries of $\Hyp^2$ on each factor, and on the set of timelike geodesics induced by isometries of $\AdS^3$.

Anti-de Sitter space is naturally endowed with a boundary, which is defined in the following way:
$$\partial_\infty\AdS^3:=\RP^1\times \RP^1~,$$
where a sequence $\gamma_n\in\PSLR$ converges to a pair $(p,q)\in\RP^1\times \RP^1$ if and only if there exists a point $x\in\Hyp^2$ such that:
\begin{equation} \label{eq convergence boundary}
\begin{cases}
\gamma_n(x)\to p \\
\gamma_n^{-1}(x)\to q
\end{cases}~.
\end{equation}
It is a classical fact that, if Equation \eqref{eq convergence boundary} holds for some $x\in\Hyp^2$, then it holds for every $x\in\Hyp^2$. Naturality of the boundary means that every isometry of $\AdS^3$, of the form $(\alpha,\beta)\in\PSLR\times\PSLR$, extends to the boundary by means of the obvious action of $(\alpha,\beta)$ on $\RP^1\times \RP^1$.

\subsection{Equivariant embeddings for closed surfaces} \label{subsec embeddings}

In this paper, $S$ will be a closed, oriented surface of negative Euler characteristic. We will be particularly interested in $\rho$-equivariant spacelike immersions $\sigma:\widetilde S\to\AdS^3$, where $\widetilde S$ is the universal cover of $S$, and $\rho:\pi_1(S)\to \PSLR\times\PSLR$ is a representation of the fundamental group of $S$. This means that $\sigma$ satisfies
$$\sigma\circ\tau=\rho(\tau)\circ\sigma~,$$
for every $\tau\in\pi_1(S)$, where the action of $\tau$ on $\widetilde S$ is by deck transformation.
 In \cite{Mess}, Mess proved that:
\begin{itemize}
\item If $\sigma$ is a $\rho$-equivariant immersion, then it is a proper embedding (Lemma 6).
\item In this case, $\rho=(\rho_l,\rho_r)$ where $\rho_l,\rho_r:\pi_1(S)\to\PSLR$ are Fuchsian representations (Proposition 19).
\item Moreover, $\sigma$ extends to the visual boundary $\partial_\infty \widetilde S$, and $\sigma(\partial_\infty \widetilde S)$ is the graph in $\partial_\infty \AdS^3=\RP^1\times \RP^1$ of the unique orientation-preserving homeomorphism which conjugates  $\rho_l$ to $\rho_r$ (Proposition 20).
\end{itemize}

Hence in this paper we will always consider representations $$\rho=(\rho_l,\rho_r):\pi_1(S)\to \isom_0(\AdS^3)\cong \PSLR\times\PSLR$$ with $\rho_l,\rho_r$ Fuchsian. 
As mentioned in the introduction, we will consider equivariant embeddings in the universal cover of $\AdS^3$, rather than in $\AdS^3$ itself. The motivation for this is the fact that, if $\sigma$ is a spacelike embedding in $\AdS^3$ equivariant for a representation $\rho=(\rho_l,\rho_r)$, then $\sigma$ lifts to an embedding $\widetilde\sigma$ in $\tAdS$, invariant by a \emph{standard lift} of $\rho$ to a representation of $\pi_1(S)$ in the isometry group of $\tAdS$. This will be explained in the next subsections.


\subsection{The universal cover of $\AdS^3$}

We will denote $\tAdS$ the metric universal cover of $\AdS^3$: if 
$$p:\USL\to\PSLR$$
is the universal covering map of $\PSLR$, then
$$\tAdS:=\left(\USL,\frac{1}{8}p^*g_\kappa\right)~.$$
Namely, we endow $\USL$ with the Lorentzian metric induced by the Killing form on the Lie algebra, which is also the metric which makes $p$ a local isometry. Let us observe that $\USL$ is a central extension of $\PSLR$. More precisely, if we denote $\mathcal Z:=\mathrm{Z}(\USL)$ the center of the Lie group $\USL$, the following is a short exact sequence of groups:
\[
\xymatrix{
1\ar[r] &\mathcal Z\ar[r] &\USL\ar[r]^p & \PSLR\ar[r]  & 1
}~.
\]
The fundamental group $\pi_1(\AdS^3)\cong\Z$ is thus identified to $\mathcal Z$, and left multiplication by an element $\xi$ of $\mathcal Z$ (which is the same as right multiplication, since $\xi$ is in the center) corresponds to the action on $\USL$ by deck transformations.

Hence it turns out that the identity component of the isometry group of $\tAdS$ is:
\begin{equation} \label{eq isometry universal ads}
\isom_0(\tAdS)=(\USL\times\USL)/\Delta_{\mathcal Z}~,
\end{equation}
where $\Delta_{\mathcal Z}$ denotes the diagonal of $\mathcal Z$:
$$\Delta_{\mathcal Z}:=\{(\xi,\xi)\,\mid\,\xi\in\mathcal Z\}<\USL\times\USL~.$$

Observe that a generator of the fundamental group $\pi_1(\PSLR)\cong\Z$ is represented by loops whose image is a closed timelike geodesic of the form $L_{x,y}$ (recall Equation \eqref{eq closed geodesic}). Hence timelike geodesics of $\tAdS$ are the preimages of timelike geodesics of $\AdS^3$, and are copies of $\R$ (with the negative metric).

Therefore we have a 1-1 correspondence
\begin{equation} \label{eq bij univ}
\Hyp^2\times\Hyp^2\leftrightarrow\{\text{timelike geodesics in }\tAdS\}~,
\end{equation}
which is defined by associating to $(x,y)\in\Hyp^2\times\Hyp^2$ the timelike geodesic 
$p^{-1}(L_{x,y})$. 
This correspondence is again equivariant, with respect to the action of $(\USL\times\USL)/\Delta_{\mathcal Z}$ on the set of timelike geodesics induced by isometries of $\tAdS$, and the action on $\Hyp^2\times\Hyp^2$:
\begin{equation} \label{eq usl on hyp2}
(\alpha,\beta)\cdot (x,y)=(p(\alpha)(x),p(\beta)(y))~.
\end{equation}
 In fact, \eqref{eq usl on hyp2} is well-defined since elements in the center $\mathcal Z=\mathrm{Ker}(p)$ act trivially on $\Hyp^2$.

Finally, $\tAdS$ is naturally endowed with a boundary $\partial_\infty\tAdS$, which is the covering of $\partial_\infty\AdS^3$ corresponding to the cyclic subgroup of $\pi_1(\partial_\infty\AdS^3)$ generated by a meridian. We will thus extend  $p:\USL\to\PSLR$ to the universal covering map:
\begin{equation} \label{eq extending p}
p:(\tAdS\cup\partial_\infty\tAdS)\to(\AdS^3\cup\partial_\infty\AdS^3)~,
\end{equation}
which we still denote $p$ by a small abuse of notation.

\subsection{Lifting representations}

We can now prove the following lemma, which will also serve to define the aforementioned \emph{standard lift} of a representation $\rho$.

\begin{lemma} \label{lemma lifting}
Let $\rho_l,\rho_r:\pi_1(S)\to\PSLR$ be Fuchsian representations. Then there exists a unique lift
$$\widetilde\rho:\pi_1(S)\to\isom_0(\tAdS)$$
of the representation
$$\rho=(\rho_l,\rho_r):\pi_1(S)\to\isom_0(\AdS^3)$$
with the following property. Given any
spacelike $\rho$-equivariant embedding $\sigma:\widetilde S\to\AdS^3$, every lift $\widetilde\sigma_i:\widetilde S\to \tAdS$ of $\sigma$ (where $i\in\Z$) is $\widetilde\rho$-equivariant.
\end{lemma}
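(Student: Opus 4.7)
The strategy is to pick one specific spacelike $\rho$-equivariant embedding $\sigma:\widetilde S\to\AdS^3$ (whose existence is recalled in Section~\ref{subsec embeddings}), a lift $\widetilde\sigma:\widetilde S\to\tAdS$, and a basepoint $\widetilde s_0\in\widetilde S$, and to use this data to define $\widetilde\rho(\tau)$, for every $\tau\in\pi_1(S)$, as the unique lift of $\rho(\tau)\in\isom_0(\AdS^3)$ to $\isom_0(\tAdS)$ satisfying $\widetilde\rho(\tau)(\widetilde\sigma(\widetilde s_0))=\widetilde\sigma(\tau\cdot\widetilde s_0)$. The projection $p:\isom_0(\tAdS)\to\isom_0(\AdS^3)$ induced by~\eqref{eq isometry universal ads} is a covering with discrete fiber $\mathcal Z$, and lifts of a given isometry are pinned down by their value at any one point of $\tAdS$, so this construction is unambiguous. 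Uniqueness of $\widetilde\rho$ in the statement is also immediate, since any lift of $\rho$ with the asserted property must satisfy this same defining equation.

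From this single-$\sigma$ construction I would then verify that $\widetilde\rho(\tau)\circ\widetilde\sigma=\widetilde\sigma\circ\tau$ on all of $\widetilde S$ and that $\widetilde\rho$ is a homomorphism. The first follows by connectedness of $\widetilde S$: both sides are continuous lifts of $\sigma\circ\tau$ agreeing at $\widetilde s_0$. The second follows from the chain $\widetilde\rho(\tau_1\tau_2)\circ\widetilde\sigma=\widetilde\sigma\circ\tau_1\tau_2=\widetilde\rho(\tau_1)\circ\widetilde\sigma\circ\tau_2=\widetilde\rho(\tau_1)\widetilde\rho(\tau_2)\circ\widetilde\sigma$, together with uniqueness of lifts agreeing on an open set. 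Independence from the choice of lift $\widetilde\sigma_i=\zeta\cdot\widetilde\sigma$ for $\zeta\in\mathcal Z$ is immediate from centrality: $\widetilde\rho(\tau)\widetilde\sigma_i=\zeta\widetilde\rho(\tau)\widetilde\sigma=\zeta\widetilde\sigma\circ\tau=\widetilde\sigma_i\circ\tau$.

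The main obstacle is showing that the same $\widetilde\rho$ makes any other spacelike $\rho$-equivariant embedding $\sigma'$ equivariant. Given a lift $\widetilde\sigma'$ of $\sigma'$, the maps $\widetilde\rho(\tau)\circ\widetilde\sigma'$ and $\widetilde\sigma'\circ\tau$ are both continuous lifts of $\sigma'\circ\tau$, and therefore differ by a deck transformation $\zeta(\tau)\in\mathcal Z$ (constant on $\widetilde S$ by connectedness). To show $\zeta(\tau)=1$ I would exploit Mess's boundary description recalled in Section~\ref{subsec embeddings}: every spacelike $\rho$-equivariant embedding extends continuously to $\partial_\infty\widetilde S$ with image the same graph $\Gamma\subset\partial_\infty\AdS^3$, depending only on $(\rho_l,\rho_r)$. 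Since $\widetilde S\cup\partial_\infty\widetilde S$ is simply connected, these boundary extensions lift continuously via~\eqref{eq extending p} to maps into $\partial_\infty\tAdS$ with images in two lifts of $\Gamma$ that differ by a deck transformation; replacing $\widetilde\sigma'$ by an appropriate central translate, I may assume the boundary extensions of $\widetilde\sigma$ and $\widetilde\sigma'$ agree as maps $\partial_\infty\widetilde S\to\partial_\infty\tAdS$.

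With this alignment, the equivariance of $\widetilde\sigma$ extends to the boundary by continuity, giving $\widetilde\rho(\tau)\widetilde\sigma(\xi)=\widetilde\sigma(\tau\cdot\xi)$ for every $\xi\in\partial_\infty\widetilde S$. On the other hand, the equation $\widetilde\rho(\tau)\widetilde\sigma'=\zeta(\tau)\cdot(\widetilde\sigma'\circ\tau)$ extends to the same boundary, giving $\widetilde\rho(\tau)\widetilde\sigma'(\xi)=\zeta(\tau)\widetilde\sigma'(\tau\cdot\xi)$. Using $\widetilde\sigma=\widetilde\sigma'$ on $\partial_\infty\widetilde S$, comparison forces $\zeta(\tau)\widetilde\sigma(\tau\cdot\xi)=\widetilde\sigma(\tau\cdot\xi)$ for every $\xi\in\partial_\infty\widetilde S$; since deck transformations act freely on the cover, this gives $\zeta(\tau)=1$. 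Hence $\widetilde\sigma'$ is $\widetilde\rho$-equivariant, and by the centrality argument of the second paragraph the same conclusion extends to every other lift of $\sigma'$.
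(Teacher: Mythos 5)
Your construction is sound in outline and, at its core, runs on the same fuel as the paper's proof: Mess's boundary rigidity. The packaging of the definition differs --- you pin down $\widetilde\rho(\tau)$ as the unique lift of $\rho(\tau)$ matching the values of a chosen lifted embedding at a basepoint, whereas the paper defines $\widetilde\rho(\tau)$ as the unique lift preserving a fixed connected component $\Gamma_{i_0}$ of $p^{-1}(\Gamma)$, with no reference to any embedding. Your verifications of well-definedness, the homomorphism property, equivariance of $\widetilde\sigma$, and independence of the choice of lift via centrality of $\mathcal Z$ are all correct. Note one structural cost of your variant: it presupposes the existence of at least one spacelike $\rho$-equivariant embedding (true --- e.g.\ the maximal one used later in the paper --- but Section~\ref{subsec embeddings} recalls properties of such embeddings, not their existence), while the paper's definition via $\Gamma_{i_0}$ is embedding-free.

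There is, however, one genuine jump: the sentence ``replacing $\widetilde\sigma'$ by an appropriate central translate, I may assume the boundary extensions of $\widetilde\sigma$ and $\widetilde\sigma'$ agree \emph{as maps}.'' What Mess gives you (and what your lifting argument aligns) is equality of boundary \emph{images}: both extensions have image $\Gamma$ downstairs, and after a central translate both lifted extensions have image the same curve $\Gamma_{i_0}$. Equality of images does not imply equality of maps, yet your final comparison $\zeta(\tau)\widetilde\sigma(\tau\cdot\xi)=\widetilde\sigma(\tau\cdot\xi)$ uses pointwise agreement on $\partial_\infty\widetilde S$. Two repairs are available. (i) Prove the pointwise statement directly: any two continuous $\rho$-equivariant maps $\partial_\infty\widetilde S\to\Gamma$ coincide, because each must send the attracting fixed point of $\tau$ to the attracting fixed point of $\rho(\tau)$, and such points are dense; this is true but needs saying. (ii) Cheaper, and what the paper actually does: run your comparison at the level of curves rather than points. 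The equivariance of $\widetilde\sigma$, extended by continuity, shows $\widetilde\rho(\tau)$ preserves $\Gamma_{i_0}$; your identity $\widetilde\rho(\tau)\circ\widetilde\sigma'=\zeta(\tau)\cdot(\widetilde\sigma'\circ\tau)$ then yields, on boundary images, $\Gamma_{i_0}=\zeta(\tau)\cdot\Gamma_{i_0}$. Since $\Gamma$ is homotopic to the image of the generator of $\pi_1(\partial_\infty\tAdS)$, each $\Gamma_i$ maps $1$-to-$1$ onto $\Gamma$ and $\mathcal Z$ acts freely and transitively on $\{\Gamma_i:i\in\Z\}$ (as the paper observes), forcing $\zeta(\tau)=1$ without ever needing the boundary maps to agree pointwise. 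With either repair your argument is complete.
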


\begin{remark}
The existence of \emph{some} lift of $\rho$ to $\isom_0(\tAdS)$ can be easily proved by some topological argument, which we now sketch. Recall that we can associate every representation $\rho:\pi_1(S)\to G$, for any Lie group $G$, with an obstruction class 
$$e(\rho)\in H^2(S,\pi_1(G))~,$$
such that $e(\rho)=0$ if and only if $\rho$ is liftable to a representation of $\pi_1(S)$ into the universal cover $\widetilde G$. More generally, given a covering map $q:H\to G$, the representation $\rho$ is liftable to $\widetilde \rho:\pi_1(S)\to H$ if and only if $e(\rho)$ is in the image of
$$q_*:H^2(S,\pi_1(H))\to H^2(S,\pi_1(G))~.$$

In our case, $G=\isom_0(\AdS^3)\cong \PSL_2\R\times \PSL_2\R$ and $H=\isom_0(\tAdS)\cong (\USL\times\USL)/\Delta_{\mathcal Z}$. Hence $e(\rho)=(e(\rho_l),e(\rho_r))$, where $e(\rho_l)$ and $e(\rho_r)$ are the usual Euler classes of the  representations $\rho_l,\rho_r:\pi_1(S)\to \PSL_2\R$. On the other hand the image of $\pi_1(H)$ in $\pi_1(G)$ is the diagonal subgroup in $\mathcal Z\times\mathcal Z$. So $\rho$ is liftable to $\isom_0(\tAdS)$ if and only if $e(\rho_l)=e(\rho_r)$.

In particular, in the case considered in this paper, the Euler class of both representations $\rho_l$ and $\rho_r$ is maximal and thus $\rho$ is always liftable as in Lemma \ref{lemma lifting}. However, the statement of Lemma \ref{lemma lifting} provides a favourite lifting $\widetilde \rho$, which will be indeed called \emph{standard}, satisfying a geometric property necessary for our construction.
\end{remark}

\begin{proof}[Proof of Lemma \ref{lemma lifting}]

Let $\Gamma$ be the curve in $\partial_\infty\AdS^3=\RP^1\times\RP^1$ which is the graph of the unique orientation-preserving homeomorphism of $\RP^1$ which conjugates $\rho_l$ to $\rho_r$, and let $\Gamma_i$ be the preimages of $\Gamma$ by the covering map $p:(\tAdS\cup\partial_\infty\tAdS)\to(\AdS^3\cup\partial_\infty\AdS^3)$. That is,
$$p^{-1}(\Gamma)=\bigsqcup_{i\in\Z}\Gamma_i~,$$
where we recall from \eqref{eq extending p} that we use the letter $p$ to denote the extended projection map, by a small abuse of notation. 
As $\Gamma$ is homotopic to $p_*(c_0)$, where the loop $c_0$ represents the generator 
of $\pi_1(\partial_\infty\tAdS)$, it turns out that $p|_{\Gamma_i}:\Gamma_i\to\Gamma$ is 1-to-1 and $\mathcal Z$ acts freely and transitively on $\{\Gamma_i:i\in\Z\}$.

Now, fix a lift $\Gamma_{i_0}$. Then, for every element $\tau$ of the fundamental group $\pi_1(\AdS^3)$, there exists a unique lift of $\rho(\tau)$ to $\isom_0(\tAdS)$ which preserves $\Gamma_{i_0}$. Define $\widetilde \rho(\tau)$ this element. (In fact, it suffices to pick any lift, which will send $\Gamma_{i_0}$ to some $\Gamma_i$, and then compose with an element of the center.) It is easy to check that $\widetilde \rho$ defined in this way is a representation. 

Actually, $\widetilde \rho$ does preserve every lift $\Gamma_i$. In fact, if $\widetilde\sigma_i=\xi\circ\widetilde\sigma_0$ where $\xi\in\mathcal Z$, the unique representation which preserves $\Gamma_i$ is $\xi\circ\widetilde\rho\circ\xi^{-1}$. But since $\xi$ is in the center, this coincides with $\widetilde \rho$.

Let us now show that the representation $\widetilde \rho$ defined in this way satisfies the claimed property. If $\sigma$ is an equivariant embedding as in the hypothesis, as explained in Subsection \ref{subsec embeddings} (see \cite[Proposition 20]{Mess}), the embedding $\sigma$ extends to the visual boundary $\partial_\infty\widetilde S$, with $\partial_\infty\sigma(\widetilde S)=\Gamma$.

For each $i$ there exists a unique lift $\widetilde \sigma_i$ of $\sigma$ such that $\partial_\infty \widetilde\sigma_i(\widetilde S)=\Gamma_i$.
Since every curve $\Gamma_i$ is left invariant by $\widetilde\rho(\tau)$, for every $\tau\in\pi_1(S)$, the same holds for $\sigma_i(\widetilde S)$. Hence the embeddings $\widetilde \sigma_i$ are $\widetilde\rho$-equivariant, for every $i\in \Z$. This concludes the proof.
\end{proof}

We will call \emph{standard lift} of $\rho$ the representation $\widetilde \rho$ constructed in Lemma \ref{lemma lifting}:

\begin{defi} \label{defi standard lift}
Let $\rho_l,\rho_r:\pi_1(S)\to\PSLR$ be Fuchsian representations. We call \emph{standard lift} of $\rho=(\rho_l,\rho_r):\pi_1(S)\to\isom_0(\AdS^3)$
the representation
$$\widetilde\rho:\pi_1(S)\to\isom_0(\tAdS)$$
which leaves every connected component of $p^{-1}(\Gamma)$ invariant, where $\Gamma\subset\partial_\infty\AdS^3\cong \RP^1\times\RP^1$ is the graph of the the unique  orientation-preserving homeomorphism of $\RP^1$ which conjugates $\rho_l$ with $\rho_r$.
\end{defi}

\begin{remark} \label{remark other lifts of reps}
Observe that, given a representation $\rho=(\rho_l,\rho_r)$ into $\isom_0(\AdS^3)$, any other lift $\widetilde \rho'$ into $\isom_0(\tAdS)$ is of the form
\begin{equation} \label{eq other lifts}
\widetilde \rho'(\tau)=\xi(\tau)\widetilde \rho(\tau)~,
\end{equation}
where $\xi:\pi_1(S)\to\mathcal Z$ is a representation in the center $\mathcal Z$. In fact, we know that the kernel of $p:\USL\to\PSLR$ is $\mathcal Z$, hence the form \eqref{eq other lifts}. Moreover,  the condition that $\widetilde \rho'$ is a representation coincides exactly with the condition that $\xi$ is a representation with values in $\mathcal Z$, which is thus identified to an element of $\Hom(\pi_1(S),\Z)\cong H^1(S,\Z)$. 
\end{remark}

\subsection{Unit future timelike tangent bundle} \label{unit tangent bundle}

We will consider embeddings of surfaces in the \emph{unit future timelike tangent bundle} of $\tAdS$. Let us fix a time-orientation on $\AdS^3$, and thus on $\tAdS$. This is the subbundle $$\Pi:\TA\to\tAdS$$
 of the tangent bundle $T\tAdS$ whose fiber over the point $\gamma\in\tAdS$ is
 $$\Pi^{-1}(\gamma)=\{(\gamma,u)\in T\tAdS\,:\,\langle u,u\rangle=-1\text{ and }u\text{ is positively time-oriented}\}~,$$
 where $\langle\cdot,\cdot\rangle$ denotes the metric we defined on $\tAdS$. Given a point $(\gamma,u)$, we will denote by $\mathcal V$ the \emph{vertical bundle}, namely the subbundle of $T\TA$ defined by:
 $$\mathcal V_{(\gamma,u)}=\mathrm{Ker}\left(\Pi_*:T_{(\gamma,u)}\TA\to T_\gamma \tAdS \right)~.$$
That is, $\mathcal V_{(\gamma,u)}$ is the vector subspace tangent to the fiber (which has dimension 2). This is identified to the subspace of $T_{(\gamma,u)}\tAdS$ tangent to the set of unit positive timelike vectors, namely, to the orthogonal complement $u^\perp$ in $T_\gamma \tAdS$. 

On the other hand, given a vector $w\in T_\gamma\tAdS$, we define its \emph{horizontal lift} at the point $(\gamma,u)$ in the following way. Let $\gamma(t)$  be a curve in  $\tAdS$ such that $\gamma(0)=\gamma$ and $\gamma'(0)=w$. Let $u(t)$ be the parallel transport of $u(0)$ along $\gamma(t)$ with respect to the Levi-Civita connection $\nabla$ of $\tAdS$: that is, $u(t)$ satisfies $\nabla_{\gamma'(t)}u(t)=0$.  Observe that, by compatibility of the metric, $(\gamma(t),u(t))$ is in $\TA$. Then 
$$w^h:=(\gamma,u)'(0)$$ is the horizontal lift of $w$ at $(\gamma,u)$. 
Observe that the map
$$w\in T_\gamma\tAdS\to w^h$$
is linear and injective. Hence 
the \emph{horizontal subspace} is the (3-dimensional) vector subspace 
$$\mathcal H_{(\gamma,u)}:=\{w^h\,:\,w\in T_\gamma\tAdS\}~.$$

We can now define a natural metric on $\TA$, known as the \emph{Sasaki metric}:
\begin{defi} \label{defi sasaki}
The Sasaki metric is the pseudo-Riemannian metric $g_S$ on $\TA$ defined, for $X_1,X_2\in T_{(\gamma,u)}\TA$, by:
$$
g_S(X_1,X_2):=\begin{cases}
\langle w_1,w_2\rangle & \text{if }X_1,X_2\in \mathcal H_{(\gamma,u)}, X_1=w_1^h,X_2=w_2^h ~;\\
 \langle v_1,v_2\rangle & \text{if }X_1,X_2\in \mathcal V_{(\gamma,u)}\text{ and they correspond to }v_1,v_2\in u^\perp ~;\\
 0 & \text{if }X_1\in \mathcal H_{(\gamma,u)}, X_2\in \mathcal V_{(\gamma,u)} \text{ or viceversa}~.
\end{cases}
$$
\end{defi}

The isometry group $\isom_0(\tAdS)$ induces an obvious action on $\TA$, by means of:
\begin{equation} \label{eq induced isometry}
\varrho_*(\gamma,u):=(\varrho(\gamma),\varrho_*(u))~,
\end{equation}
for any $\varrho\in\isom_0(\tAdS)$. This action on $\TA$ preserves the horizontal and vertical subspaces, in the sense that 
$$\varrho_* \mathcal H_{(\gamma,u)}=\mathcal H_{\varrho_*(\gamma,u)}\qquad\text{and}\qquad
\varrho_* \mathcal V_{(\gamma,u)}=\mathcal V_{\varrho_*(\gamma,u)}~,$$
and acts by isometries for the Sasaki metric.

\section{Principal $\R$-bundles and their curvature} \label{sec principal bundles}

In this section we introduce the principal $\R$-bundle
$\pi_\rho:P_\rho \to (S,h_l)\times (S,h_r)$, its connection form $\omega_\rho$, and study some of its properties, in relation with equivariant embeddings of $\widetilde S$ into $\TA$. Most remarkably, in Proposition \ref{eq curvature} we give an expression for the curvature form of $P_\rho$ only in terms of a natural symplectic form $\Omega_\rho$ on the base $S\times S$.

\subsection{$\R$-action by geodesic flow}
Let us consider again the unit future timelike tangent bundle $\TA$. There is a natural $\R$-action on $\TA$ given by the \emph{geodesic flow}. Namely, given $t\in\R$ and $(\gamma,u)\in\TA$, we define 
\begin{equation} \label{eq defi geod flow}
\varphi_t(\gamma,u):=(\gamma(t),\gamma'(t))~,
\end{equation}
where $\gamma(t)$ is the unique geodesic of $\tAdS$ such that $\gamma(0)=\gamma$ and $\gamma'(0)=u$. That is, $\gamma(t)$ satisfies $\nabla_{\gamma'(t)}\gamma'(t)=0$, where $\nabla$ is the Levi-Civita connection of $\tAdS$. In particular, $\langle \gamma'(t),\gamma'(t)\rangle=-1$ for all $t$, and thus \eqref{eq defi geod flow} is well-defined on $\TA$.

Recalling the bijection in \eqref{eq bij univ} between the space of timelike geodesics of $\tAdS$ and $\Hyp^2\times\Hyp^2$, which is given by 
$$(x,y)\in\Hyp^2\times\Hyp^2\mapsto p^{-1}(L_{x,y})~,$$
we can define a projection
$$\pi:\TA\to\Hyp^2\times \Hyp^2~,$$
by mapping $(\gamma,u)$ to the pair $(x,y)\in\Hyp^2\times\Hyp^2$ such that, if $\gamma(t):\R\to\tAdS$ is the geodesic of $\tAdS$ with $\gamma(0)=\gamma$ and $\gamma'(0)=u$, then
$$p(\{\gamma(t)\,:\,t\in\R\})=L_{x,y}~.$$

The key point for our construction is that $\pi$ provides $\TA$ with a principal $\R$-bundle structure.

\begin{lemma}
The bundle $\pi:\TA\to\Hyp^2\times \Hyp^2$ is a $\R$-principal bundle, where the $\R$-action on $\TA$ is given by the geodesic flow.
\end{lemma}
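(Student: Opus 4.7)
The plan is to verify that (a) the geodesic flow preserves $\TA$ and defines a smooth $\R$-action, (b) the orbits of this action coincide with the fibers of $\pi$, and (c) the action is free and proper. By the standard fact that the quotient of a manifold by a smooth free proper Lie group action carries a principal bundle structure, items (a)--(c) together yield the conclusion.

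For (a), I would observe that for $(\gamma,u)\in\TA$, metric compatibility of $\nabla$ forces $\langle\gamma'(t),\gamma'(t)\rangle=-1$ for all $t$, and positive time-orientation is preserved by continuity along the geodesic. Geodesic completeness of $\tAdS$ (which holds because it is a Lie group with bi-invariant metric, so timelike geodesics are translates of one-parameter subgroups) ensures that $\varphi_t$ is defined for all $t\in\R$ and smooth in $(t,\gamma,u)$; the flow property $\varphi_{s+t}=\varphi_s\circ\varphi_t$ is standard.

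For (b), I would use the definition of $\pi$: a point $(\gamma,u)$ lies in $\pi^{-1}(x,y)$ iff the geodesic through $\gamma$ with velocity $u$ projects under $p$ to $L_{x,y}$, so $\pi\circ\varphi_t=\pi$ and every orbit is contained in a fiber. Conversely, if $(\gamma,u),(\gamma',u')$ lie in the same fiber, then $\gamma$ and $\gamma'$ both lie on the common timelike geodesic $p^{-1}(L_{x,y})$ of $\tAdS$ and carry the positively time-oriented unit tangent, so $(\gamma',u')=\varphi_t(\gamma,u)$ for a unique $t\in\R$; this also gives surjectivity of $\pi$ onto $\Hyp^2\times\Hyp^2$. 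Freeness is then immediate from the observation made in the preceding subsection that timelike geodesics of $\tAdS$ are copies of $\R$ (the universal cover of the closed timelike geodesics of $\AdS^3$), so $\varphi_t(\gamma,u)=(\gamma,u)$ forces $\gamma(t)=\gamma$, hence $t=0$. Properness of the $\R$-action follows from the properness of each orbit map $t\mapsto\varphi_t(\gamma_0,u_0)$, which in turn holds because its composition with $\Pi:\TA\to\tAdS$ is the properly embedded geodesic $\gamma_0(\R)$.

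The main delicate point is the passage from (a)--(c) to a bona fide principal bundle structure, i.e.\ local triviality. One option is to invoke the general theorem that a smooth free proper $\R$-action produces a principal $\R$-bundle structure on the orbit space, and then identify the resulting quotient with $\Hyp^2\times\Hyp^2$ via the bijection of (b). A more hands-on alternative is to exhibit local sections explicitly: near any $(\gamma_0,u_0)\in\TA$, choose a $4$-dimensional smooth submanifold transverse to the flow line through $(\gamma_0,u_0)$, and use the flow to parametrize a tubular neighborhood, producing a local trivialization of $\pi$ compatible with the $\R$-action over a neighborhood of $\pi(\gamma_0,u_0)$.
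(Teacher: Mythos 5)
Your items (a) and (b), together with freeness, reproduce (in considerably more detail) the entire content of the paper's proof, which consists of two sentences: the flow preserves the fibers by construction, and since each timelike geodesic of $\tAdS$ is a real line, the $\R$-action on each fiber is free and transitive. The paper leaves smoothness, properness and local triviality implicit, so your attempt to supply them is the right instinct; unfortunately the step you supply them with contains a genuine error. The claim that ``properness of the $\R$-action follows from the properness of each orbit map'' is false in general. Consider the hyperbolic flow $\varphi_t(x,y)=(e^t x, e^{-t}y)$ on $\R^2\setminus\{0\}$: the action is free, every orbit (the hyperbola branches and the four half-axes) is a closed, properly embedded copy of $\R$, yet the action is not proper --- taking $K=\{(0,1)\}\cup\{(\epsilon_n,1)\}$ and $L=\{(1,0)\}\cup\{(1,\epsilon_n)\}$ with $\epsilon_n\to 0$, the times $t_n=-\log\epsilon_n\to\infty$ satisfy $\varphi_{t_n}(K)\cap L\neq\emptyset$, and indeed the orbit space is non-Hausdorff. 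So properness (equivalently, Hausdorffness and smoothness of the quotient) requires an argument specific to this situation, not just proper orbits.

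A second, related omission: nowhere in (a)--(c) do you establish that $\pi$ itself is smooth (or even continuous) --- it is defined via the classification of timelike geodesics, and your ``hands-on alternative'' tacitly uses smoothness of $\pi$ when identifying the flow-box quotient with an open subset of $\Hyp^2\times\Hyp^2$; likewise, the quotient-theorem route would only give a manifold structure on $\TA/\R$, which must still be matched diffeomorphically with $\Hyp^2\times\Hyp^2$ via $\pi$. Both gaps can be closed using material the paper provides in the same subsection: Lemma \ref{lemma projection lie algebra} gives the explicit formula $(f,f)\circ\pi(\gamma,u)=((R_{\gamma^{-1}})_*u,(L_{\gamma^{-1}})_*u)$, which shows $\pi$ is a smooth submersion; one then chooses a small transversal $T$ to the flow on which $\pi$ restricts to a diffeomorphism onto an open set $V$, and checks that $(v,t)\mapsto\varphi_t(s(v))$, for $s=(\pi|_T)^{-1}$, is a bijective local diffeomorphism onto $\pi^{-1}(V)$ --- surjectivity from your orbits-equal-fibers statement, injectivity from freeness --- which yields local triviality with no properness needed. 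Alternatively, one can use the equivariance of $\pi$ (Lemma \ref{lemma equivariance pi}) and the transitivity of $\isom_0(\tAdS)$ on $\TA$ to view $\pi$ as an equivariant map of homogeneous spaces, for which the bundle property is standard.
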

\begin{proof}
By construction, the $\R$-action preserves every fiber $\pi^{-1}(x,y)$. Moreover, as each geodesic of $\tAdS$ is a real line, the $\R$-action on each geodesic is free and transitive.
\end{proof}

Recall that the group 
$\isom_0(\tAdS)=(\USL\times\USL)/\Delta_{\mathcal Z}$ acts on $\TA$ by isometries, and on $\Hyp^2\times\Hyp^2$ by
$$(\alpha,\beta)\cdot (x,y)=(p(\alpha)(x),p(\beta)(y))~,$$
where $\alpha,\beta\in\USL$ and $p$ is the covering map $p:\USL\to\PSLR$. 
With these definitions, we have:

\begin{lemma} \label{lemma equivariance pi}
The projection $\pi:\TA\to\Hyp^2\times \Hyp^2$ is equivariant for the natural action of $\isom_0(\tAdS)=(\USL\times\USL)/\Delta_{\mathcal Z}$. That is, if $\pi(\gamma,u)=(x,y)$, then for every $(\alpha,\beta)\in \USL\times\USL$, 
$$\pi\left((\alpha,\beta)_*(\gamma,u)\right)=(p(\alpha)(x),p(\beta)(y))~.$$
\end{lemma}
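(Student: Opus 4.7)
The plan is to unfold definitions and reduce the claim to the already established transformation law \eqref{eq transformation geodesics} at the level of $\AdS^3$.

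First, I would observe that since $(\alpha,\beta) \in \USL \times \USL$ acts on $\tAdS$ by isometries, its induced action on $T\tAdS$ sends geodesics to geodesics: if $\gamma(t)$ is the geodesic of $\tAdS$ with $\gamma(0) = \gamma$ and $\gamma'(0) = u$, then $t \mapsto (\alpha,\beta) \cdot \gamma(t)$ is the geodesic whose initial conditions are precisely $(\alpha,\beta)_*(\gamma,u)$. In particular, the full timelike geodesic $\widetilde L \subset \tAdS$ generated by $(\gamma,u)$ is mapped to the timelike geodesic generated by $(\alpha,\beta)_*(\gamma,u)$, namely $(\alpha,\beta) \cdot \widetilde L$.

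Second, I would use that the covering map $p : \tAdS \to \AdS^3$ is equivariant in the sense that $p \circ (\alpha,\beta) = (p(\alpha),p(\beta)) \circ p$, with the right-hand side acting as in \eqref{eq isometry action}. This is immediate from the construction of the metric on $\tAdS$ as the pullback of the metric on $\AdS^3$ via $p$, together with the fact that left/right multiplication on $\USL$ descends through $p$ to left/right multiplication on $\PSLR$. Consequently, writing $\pi(\gamma,u) = (x,y)$, one has $p(\widetilde L) = L_{x,y}$, and hence
\[
p\bigl((\alpha,\beta)\cdot\widetilde L\bigr) \;=\; (p(\alpha),p(\beta))\cdot L_{x,y}.
\]
Applying \eqref{eq transformation geodesics} rewrites the right-hand side as $L_{p(\alpha)(x),\,p(\beta)(y)}$, which by the definition of $\pi$ means exactly
\[
\pi\bigl((\alpha,\beta)_*(\gamma,u)\bigr) \;=\; \bigl(p(\alpha)(x),\,p(\beta)(y)\bigr).
\]

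Finally, I would note that the argument passes to $\isom_0(\tAdS) = (\USL \times \USL)/\Delta_{\mathcal Z}$ because the final formula depends on $(\alpha,\beta)$ only through $(p(\alpha),p(\beta))$, and $\mathcal Z = \mathrm{Ker}(p)$ acts trivially on $\Hyp^2 \times \Hyp^2$ by \eqref{eq usl on hyp2}, so replacing $(\alpha,\beta)$ by $(\xi\alpha,\xi\beta)$ with $\xi \in \mathcal Z$ does not alter the right-hand side. No serious obstacle is anticipated here: the statement is essentially a bookkeeping check built on top of \eqref{eq transformation geodesics} and the functorial behaviour of the universal covering $p$; the only mild point of care is distinguishing the action of $\USL \times \USL$ upstairs from the action of $\PSLR \times \PSLR$ downstairs and keeping the two straight throughout.
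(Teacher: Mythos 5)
Your proposal is correct and takes essentially the same approach as the paper, whose proof of this lemma is just the one-line remark that it is a direct consequence of the definitions and of Equation \eqref{eq transformation geodesics}. Your write-up merely makes that bookkeeping explicit (isometries carry geodesics to geodesics with pushed-forward initial data, $p$ intertwines the actions of $\USL\times\USL$ and $\PSLR\times\PSLR$, and $\mathcal Z=\mathrm{Ker}(p)$ acts trivially on $\Hyp^2\times\Hyp^2$), with no gaps.
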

\begin{proof}
The proof is direct consequence of the definitions and of Equation \eqref{eq transformation geodesics}.
\end{proof}

We will now derive an alternative expression for the projection $\pi$, which will be useful in the following. Before that, observe that there is a natural embedding
$$f:\Hyp^2\to\psl_2\R~.$$
To define $f$, for every point $x\in\Hyp^2$ let us denote $\mathcal R_{\theta,x}\in\PSLR$ the elliptic isometry of $\Hyp^2$ which fixes $x$ and is a counterclockwise rotation  around $x$ of angle $\theta$. Then define
\begin{equation} \label{eq defi f}
f(x)=\left.\frac{d}{d\theta}\right|_{\theta=0} \mathcal R_{2\theta,x}\in\psl_2\R~.
\end{equation}
By the arguments of \cite[Section 2]{ksurfaces}, $f$ is an isometric embedding, for the hyperbolic metric of $\Hyp^2$ and the Lorentzian metric on $\psl_2\R$ (which is $1/8$ times the Killing form). Moreover, $f$ is equivariant with respect to the action of $\PSLR$ (or of $\USL$)
on $\Hyp^2$, and the adjoint action on $\psl_2\R$:
$$f(\gamma(x))=\left.\frac{d}{d\theta}\right|_{\theta=0} \mathcal R_{2\theta,\gamma(x)}
=\left.\frac{d}{d\theta}\right|_{\theta=0} \left(\gamma\circ \mathcal R_{2\theta,x}\circ\gamma^{-1}\right)=\mathrm{Ad}(\gamma)f(x)~.$$

\begin{lemma} \label{lemma projection lie algebra}
Given any $(\gamma,u)\in\TA$,
\begin{equation} \label{eq projection lie algebra}
(f,f)\circ\pi(\gamma,u)=((R_{\gamma^{-1}})_* u,(L_{\gamma^{-1}})_* u)~,
\end{equation}
where $f$ is defined in Equation \eqref{eq defi f}.
\end{lemma}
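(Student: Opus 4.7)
The plan is to exploit the Lie group structure of $\tAdS$ and the fact that the pseudo-Riemannian metric is bi-invariant, so that geodesics can be described explicitly as translates of one-parameter subgroups. Since the projection $p:\tAdS\to\AdS^3$ is a local isometry of Lie groups (intertwining $\exp$), it suffices to work in $\AdS^3=\PSL_2\R$ and identify the timelike geodesic $L_{x,y}$ that equals the image under $p$ of the geodesic in $\tAdS$ determined by $(\gamma,u)$.

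First I would recall the standard fact that in a Lie group with bi-invariant metric, the geodesic through $\gamma$ with velocity $u\in T_\gamma G$ admits the two equivalent descriptions
\[
t\mapsto \gamma\cdot \exp(tv)=\exp(tw)\cdot \gamma,\qquad v:=(L_{\gamma^{-1}})_* u,\ w:=(R_{\gamma^{-1}})_* u,
\]
where $\exp$ is the Lie-theoretic exponential map in $\PSL_2\R$ (which coincides with the Riemannian exponential, since $g_\kappa$ is bi-invariant). The elements $v,w\in\psl_2\R$ are unit future-timelike vectors, because left/right translations are isometries preserving time orientation.

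Next I would unwind the defining condition of $\pi$. By definition, $\pi(\gamma,u)=(x,y)$ is the unique pair such that $p$ sends the geodesic through $(\gamma,u)$ onto $L_{x,y}=\{\delta\in\PSL_2\R:\delta(y)=x\}$. Writing the geodesic as $t\mapsto \gamma\exp(tv)$, the condition $\gamma\exp(tv)\in L_{x,y}$ for all $t$, together with $\gamma(y)=x$ at $t=0$, is equivalent to
\[
\exp(tv)(y)=y\quad\text{for all }t\in\R,
\]
i.e.\ $\{\exp(tv)\}_{t\in\R}$ is the one-parameter elliptic subgroup fixing $y$. Using the right-translate expression $\exp(tw)\cdot\gamma$ identically gives that $\exp(tw)$ fixes $x$.

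Finally I would match these one-parameter subgroups with $f(y)$ and $f(x)$. The tangent space to $\Stab(y)\subset\PSL_2\R$ at the identity is the one-dimensional timelike line $\R\cdot f(y)\subset\psl_2\R$, and by the normalization recalled in the text (the closed timelike geodesic $L_{y,y}$ has length $\pi$, so $f(y)$ defined via $\mathcal R_{2\theta,y}$ is a unit timelike vector); since $v$ is unit future-timelike, the time-orientation convention forces $v=f(y)$, hence $f(y)=(L_{\gamma^{-1}})_* u$. The symmetric argument yields $f(x)=(R_{\gamma^{-1}})_* u$, which together give \eqref{eq projection lie algebra}. The only real point to watch is the sign in identifying $v$ with $f(y)$ rather than $-f(y)$; this is a matter of checking that our fixed time-orientation on $\AdS^3$ and the counterclockwise convention defining $f$ are compatible, which follows because both are determined by the chosen orientation of $\Hyp^2$.
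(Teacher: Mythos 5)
Your proof is correct, and it is organized differently from the paper's. The paper first proves \eqref{eq projection lie algebra} at $\gamma=\mathrm{id}$, where (geodesics through the identity being one-parameter subgroups) the claim $f(x_0)=u_0$ is read off directly from the definition \eqref{eq defi f}, and then handles general $(\gamma,u)$ by applying the isometry $(\gamma,\mathrm{id})$ and combining the equivariance of $\pi$ (Lemma \ref{lemma equivariance pi}) with the $\mathrm{Ad}$-equivariance of $f$. You instead argue directly at an arbitrary point: the two presentations $\gamma\exp(tv)=\exp(tw)\gamma$ of the geodesic, with $v=(L_{\gamma^{-1}})_*u$ and $w=(R_{\gamma^{-1}})_*u$, together with the coset description $L_{x,y}=\{\delta:\delta(y)=x\}$, yield the separate fixed-point conditions $\exp(tv)(y)=y$ and $\exp(tw)(x)=x$, after which $v=f(y)$ and $w=f(x)$ follow from the stabilizer lines $\R\, f(y)$ and $\R\, f(x)$, the unit normalization, and time-orientation. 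What your route buys is independence from Lemma \ref{lemma equivariance pi} and from the equivariance of $f$ (the left and right identifications are made symmetrically, rather than transporting one to the other via $\mathrm{Ad}(\gamma)$); what it costs is that the sign check $v=f(y)$ versus $v=-f(y)$ must be made explicitly, whereas the paper absorbs it into the base case ``this is exactly the definition of $f$''. Your resolution of that sign is legitimate: the paper never pins down its time-orientation explicitly, but the convention that $f$ takes values in \emph{future} unit timelike vectors is precisely the one used implicitly there (cf.\ Example \ref{ex fuchsian}, where $(\mathrm{id},f(x))\in\TA$), and the paper's own base case quietly relies on the same convention. The reduction from $\tAdS$ to $\PSL_2\R$ via $p$ is also sound as you assert it: $p$ is a covering homomorphism and local isometry, so it intertwines $\exp$, left and right translations, and the Lie-algebra identifications, and the geodesic of $\tAdS$ through $(\gamma,u)$ projects onto the full closed geodesic $L_{x,y}$, which is all your unwinding of the definition of $\pi$ requires.
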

\begin{proof}
We need to show that, if the timelike geodesic with initial data $(\gamma,u)$ is the geodesic $L_{x,y}$, then $f(x)=(R_{\gamma^{-1}})_* u$ and $f(y)=(L_{\gamma^{-1}})_* u$. 

Let us first prove Equation \eqref{eq projection lie algebra} when $\gamma=\mathrm{id}$. In this case, $\pi(\mathrm{id},u_0)=(x_0,x_0)$ for some $x_0\in\Hyp^2$, since the timelike geodesics through the identity all have the form $L_{x,y}$ with $x=y$, as a consequence of Equation \eqref{eq closed geodesic}. Hence we need to check that for every $u_0\in T_{\mathrm{id}}\tAdS=\psl_2\R$, if the 1-parameter group generated by $u_0$ is the geodesic $L_{x_0,x_0}$, then $f(x_0)=u_0$. This is exactly the definition of $f$ in Equation \eqref{eq defi f}.

Now, let us prove the general case. Given any $\gamma\in\USL$ and any $u\in T_{\gamma}\tAdS$, if we pick $u_0$ such that $u=(L_\gamma)_*(u_0)$, then
$$(\gamma,\mathrm{id})_*(\mathrm{id},u_0)=(\gamma,u)~.$$
(Here we think of $(\gamma,\mathrm{id})\in\USL\times\USL$ as an isometry of $\tAdS$, by means of Equation \eqref{eq isometry universal ads}.) According to Lemma \ref{lemma equivariance pi}, 
$$\pi(\gamma,u)=(\gamma,\mathrm{id})\cdot \pi(\mathrm{id},u_0)=(\gamma,\mathrm{id})\cdot (x_0,x_0)=(\gamma(x_0),x_0)~,$$
On the other hand, since by the previous case $f(x_0)=u_0$,
$$((R_{\gamma^{-1}})_* u,(L_{\gamma^{-1}})_* u)=(\mathrm{Ad}(\gamma)u_0,u_0)=(\mathrm{Ad}(\gamma)f(x_0),f(x_0))
=(f(\gamma(x_0)),f(x_0))$$
by using the equivariance of $f$. This concludes the proof.
\end{proof}

\subsection{A principal $\R$-connection} \label{subsec principal connection}

We will define a connection form on the principal bundle $\pi:\TA\to\Hyp^2\times \Hyp^2$. For this purpose, we need to use the following result:

\begin{theorem}[\cite{geodesicflow}] \label{thm geodesicflow}
The $\R$-action on $\TA$ by the geodesic flow is an action by isometries of the Sasaki metric.
\end{theorem}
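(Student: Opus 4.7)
\emph{Approach.} The plan is to express the differential $(\varphi_t)_\ast$ in terms of Jacobi fields, and then exploit the fact that $\tAdS$ has constant sectional curvature $-1$ to check invariance of the Sasaki quadratic form. Given $X\in T_{(\gamma,u)}\TA$ with horizontal-vertical decomposition $X=w^h+v^{\mathrm{vert}}$, where $w\in T_\gamma\tAdS$ and $v\in u^\perp$, I would first represent $X$ as the initial velocity of a smooth variation $s\mapsto(\gamma_s,u_s)$ inside $\TA$ with $(\gamma_0,u_0)=(\gamma,u)$. Letting $\gamma_s(t)$ denote the geodesic with initial data $(\gamma_s,u_s)$, a standard variational computation shows that $J(t):=\partial_s\gamma_s(t)|_{s=0}$ is a Jacobi field along $\gamma(t)$ with $J(0)=w$ and $\nabla_t J(0)=v$, and that $(\varphi_t)_\ast X$ has horizontal part $J(t)$ and vertical part $\nabla_t J(t)$ at $\varphi_t(\gamma,u)=(\gamma(t),\gamma'(t))$.

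\emph{Splitting into tangent and normal.} Next I would decompose $J=J^\top+J^\perp$ relative to $\gamma'$. The antisymmetry of $R$ gives $\langle R(J,\gamma')\gamma',\gamma'\rangle=0$, so $\langle J(t),\gamma'(t)\rangle$ is affine in $t$; combined with $J(0)=w$ and $\nabla_t J(0)=v\in u^\perp$, this forces $\langle J(t),\gamma'(t)\rangle\equiv\langle w,u\rangle$. Hence $J^\top(t)=-\langle w,u\rangle\gamma'(t)$ is parallel along $\gamma$, contributing a constant $-\langle w,u\rangle^2$ to the Sasaki norm and nothing from its covariant derivative. For the normal part, constant sectional curvature $-1$ together with $\langle\gamma',\gamma'\rangle=-1$ and $\langle J^\perp,\gamma'\rangle=0$ yield $R(J^\perp,\gamma')\gamma'=J^\perp$, so the Jacobi equation reduces to $\nabla_t^2 J^\perp+J^\perp=0$. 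Differentiating directly,
$$\frac{d}{dt}\bigl(\langle J^\perp,J^\perp\rangle+\langle\nabla_t J^\perp,\nabla_t J^\perp\rangle\bigr)=2\langle\nabla_t J^\perp,J^\perp\rangle+2\langle\nabla_t^2 J^\perp,\nabla_t J^\perp\rangle=0~.$$
Summing the tangential and normal contributions, $g_S\bigl((\varphi_t)_\ast X,(\varphi_t)_\ast X\bigr)=\langle J(t),J(t)\rangle+\langle\nabla_t J(t),\nabla_t J(t)\rangle$ is independent of $t$; polarising then extends the invariance to the full bilinear form.

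\emph{Main obstacle.} The delicate point is the rigorous identification of $(\varphi_t)_\ast X$ with the pair $(J(t),\nabla_t J(t))$ in the horizontal-vertical splitting at the moving base point $\varphi_t(\gamma,u)$, and the care needed to keep the variation inside $\TA$, which is exactly what forces $v\in u^\perp$ and in turn makes $J^\top$ parallel. Once this is set up, the conservation of $\langle J^\perp,J^\perp\rangle+\langle\nabla_t J^\perp,\nabla_t J^\perp\rangle$ is the harmonic-oscillator identity specific to constant sectional curvature $-1$. As a shortcut, one can alternatively use that $\isom_0(\tAdS)$ acts transitively on $\TA$ by Sasaki isometries and commutes with the geodesic flow (since isometries carry geodesics to geodesics), so it is enough to verify the isometry property at a single point $(\gamma,u)$, reducing the check to a finite-dimensional computation in $\psl_2\R$ via the bi-invariance of the metric on $\widetilde{\SL_2\R}$; but the Jacobi-field argument above is what makes the role of constant curvature transparent.
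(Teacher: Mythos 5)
Your proof is correct, but there is nothing in the paper to compare it against: Theorem \ref{thm geodesicflow} is quoted from the external reference cited in its statement, and the authors only \emph{use} it (to verify in Subsection \ref{subsec principal connection} that $\omega=-g_S(\chi,\cdot)$ is an $\R$-invariant principal connection). So your Jacobi-field argument is a self-contained substitute rather than a variant of an internal proof, and it checks out. The identification of $(\varphi_t)_*X$ with the pair $(J(t),\nabla_t J(t))$ is the classical description of the differential of the geodesic flow, with torsion-freeness giving $\nabla_s\partial_t\gamma_s=\nabla_t\partial_s\gamma_s$; the constraint $\langle u_s,u_s\rangle\equiv -1$ does force the vertical part $v$ into $u^\perp$, matching the paper's identification $\mathcal V_{(\gamma,u)}\cong u^\perp$; and your tangential bookkeeping is right: $\langle J,\gamma'\rangle$ is affine with slope $\langle v,u\rangle=0$, so $J^\top=-\langle w,u\rangle\gamma'$ is parallel, whence $\nabla_tJ=\nabla_tJ^\perp$ and this stays orthogonal to $\gamma'$, so no cross terms appear in the Sasaki norm. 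The oscillator identity is the heart of the matter: $R(J^\perp,\gamma')\gamma'=J^\perp$ uses \emph{both} curvature $-1$ and $\langle\gamma',\gamma'\rangle=-1$, giving $\nabla_t^2J^\perp=-J^\perp$ and hence conservation of $\langle J^\perp,J^\perp\rangle+\langle\nabla_tJ^\perp,\nabla_tJ^\perp\rangle$; polarization then upgrades the quadratic invariance to the bilinear form. This is exactly the right mechanism to isolate: in the Riemannian hyperbolic analogue the sign flips to $\nabla_t^2J^\perp=J^\perp$, normal Jacobi fields grow exponentially, the flow is Anosov, and the statement is false --- so your computation makes transparent where the Lorentzian/timelike hypotheses enter, which the paper's citation leaves opaque.

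Two small caveats. First, in your homogeneity ``shortcut'', equivariance of $\varphi_t$ under $\isom_0(\tAdS)$ reduces the verification to a single base point but \emph{not} to a single time: you would still have to compute $d\varphi_t$ at, say, $(\mathrm{id},u)$ for all $t$, which is a genuine (if finite-dimensional) computation in $\psl_2\R$; note that $\varphi_t$ agrees pointwise along the orbit with a left translation, but that does not immediately identify its differential. Second, in the main argument you should add one line confirming that every $X$ with horizontal part $w$ and vertical part $v\in u^\perp$ is actually realized by a variation inside $\TA$ (e.g.\ parallel-transport $u$ along a curve with initial speed $w$ and perturb by $v$, renormalizing), so that the Jacobi-field description covers all of $T_{(\gamma,u)}\TA$; this is routine but is the step your ``main obstacle'' paragraph gestures at without writing out.
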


Theorem \ref{thm geodesicflow} is equivalent to saying that the generator of the geodesic flow action is a Killing field. This generator is the vector field $\chi\in\Gamma^\infty(T\TA)$ of the form
\begin{equation} \label{generator chi}
\chi(\gamma,u)=u^h\in \mathcal H_{(\gamma,u)}\subset T_{(\gamma,u)}\TA~,
\end{equation}
under the identification $T_{(\gamma,u)}\TA\cong \mathcal H_{(\gamma,u)}\oplus \mathcal V_{(\gamma,u)}$ explained in Subsection \ref{unit tangent bundle}.

Let us now define the connection form of the principal bundle $\pi:\TA\to\Hyp^2\times \Hyp^2$:
\begin{defi} \label{defi connection form}
The connection form of $\pi:\TA\to\Hyp^2\times \Hyp^2$ is the 1-form $\omega\in\Omega^1(\TA,\R)$ defined by:
$$\omega=-g_S(\chi,\cdot)~,$$
where $\chi$ is the generator of the geodesic flow and $g_S$ is the Sasaki metric.
\end{defi}

It is easy to check that $\omega$ indeed defines a principal connection. For this purpose, first observe that in this case the tangent line to any fiber of $\pi$ is naturally identified to $\R$ (which is of course the Lie algebra of the group $\R$). Given a vector $X$ tangent to the fiber, $X$ is of the form $a\chi$ for some $a\in\R$, and then our identification maps $X$ to $a$. To check that $\omega$ is a principal connection, we need to show:
\begin{enumerate}
\item $\omega$ is $\R$-invariant. Since the adjoint action of $\R$ is trivial, this reduces to check that, for any $t\in\R$, $\varphi_t^*\omega=\omega$. This is indeed true since $\chi$ is $\varphi_t$-invariant and $\varphi_t$ is an isometry of $g_S$ by Theorem \ref{thm geodesicflow}:
$$\varphi_t^*\omega=\omega((\varphi_t)_*(\cdot))=-g_S(\chi,(\varphi_t)_*(\cdot))=-g_S(\chi,\cdot)=\omega~,$$
\item If $X=a\chi$ is tangent to the fiber, then $\omega(X)=a$. In fact, in this case
$$\omega(X)=-g_S(\chi,a\chi)=-a g_S(\chi,\chi)=a~,$$
by using the expression 
$\chi(\gamma,u)=u^h$ of Equation \eqref{generator chi} and the fact that $u$ is a unit vector, so that $g_S(\chi,\chi)=\langle u,u\rangle=-1$.
\end{enumerate}

In terms of Ehresmann connections, Definition \ref{defi connection form} means that the horizontal distribution of the principal bundle $\pi:\TA\to\Hyp^2\times \Hyp^2$ is given by the subspaces orthogonal to the orbits of the geodesic flow.

\subsection{Principal bundles over $S\times S$}

Let us now fix a representation 
$$\rho=(\rho_l,\rho_r):\pi_1(S)\to\PSLR\times\PSLR$$
 with $\rho_l,\rho_r$ Fuchsian, and consider the standard lift $\widetilde \rho:\pi_1(S)\to\isom_0(\tAdS)$ as in Definition \ref{defi standard lift}.
 Then for every $\tau\in\pi_1(S)$, $\widetilde \rho(\tau)\in \isom_0(\AdS^3)$ induces an isometry of $\TA$ as in Equation \eqref{eq induced isometry}. It turns out easily that the action of $\widetilde \rho(\pi_1(S))$:
 \begin{itemize}
 \item commutes with the $\R$-action of the geodesic flow;
 \item induces the action of $\rho_l(\pi_1(S))\times \rho_r(\pi_1(S))$ on $\Hyp^2\times\Hyp^2$;
 \item is free and properly discontinuous on $\TA$.
 \end{itemize}
In fact, the first point is direct consequence of $\widetilde\rho$ acting by isometries; the second follows from Lemma \ref{lemma equivariance pi}; the third point follows from the second point and the fact that $\rho_l(\pi_1(S))$ and $ \rho_r(\pi_1(S))$ act freely and properly discontinuously on $\Hyp^2$.
 
Therefore, the quotient $\TA/\widetilde \rho(\pi_1(S))$ has a structure of principal $\R$-bundle over the base
$$(\Hyp^2/\rho_l(\pi_1(S)))\times(\Hyp^2/\rho_r(\pi_1(S)))=(S,h_l)\times (S, h_r)~,$$
where we are identifying the quotient of $\Hyp^2$ by a Fuchsian group with a hyperbolic structure on $S$. Moreover, since $\isom_0(\tAdS)$ acts on $\TA$ by isometries of the Sasaki metric, the quotient bundle is also endowed with a connection form, induced by the connection form $\omega$ of Definition \ref{defi connection form}. Let us summarize this in a definition:
\begin{defi}
Given a representation $\rho=(\rho_l,\rho_r):\pi_1(S)\to\PSLR\times \PSLR$, we define 
$$\pi_\rho:P_\rho \to (S,h_l)\times (S, h_r)$$
as the principal $\R$-bundle induced by $\pi:\TA\to\Hyp^2\times \Hyp^2$, endowed with the connection form $\omega_\rho$ induced by $\omega$, where $\Hyp^2/\rho_l(\pi_1(S))=(S,h_l)$ and 
$\Hyp^2/\rho_r(\pi_1(S))=(S,h_r)$.
\end{defi}

Recall from Lemma \ref{lemma lifting} that a $\rho$-equivariant spacelike embedding $\widetilde S$ into $\AdS^3$ can be lifted to a $\widetilde \rho$-equivariant embedding
into $\tAdS$, where $\widetilde \rho:\pi_1(S)\to\isom_0(\tAdS)$ is the standard lift. We will now observe that 
it also induces an equivariant embedding into $\TA$ which is orthogonal to the generator $\chi$ of the geodesic flow, and thus parallel for the connection form $\omega_\rho$.

\begin{lemma} \label{lemma lift embedding hor}
Given a spacelike $\widetilde \rho$-equivariant embedding $\widetilde \sigma:\widetilde S\to \tAdS$, let 
$$\widetilde \sigma_N:\widetilde S\to \TA~,$$
be the map defined by
$$\widetilde \sigma_N(x)=(\widetilde \sigma(x),N(x))~,$$
where $N(x)$ is the future-directed unit vector orthogonal to $d\widetilde\sigma(T_x\widetilde S)$. Then
\begin{itemize}
\item $\widetilde \sigma_N$ is an  embedding, equivariant for the action of $\pi_1(S)$ on $\TA$ induced by $\widetilde \rho$, tangent to the horizontal distribution defined by the connection form $\omega$;  
\item $\pi\circ \widetilde \sigma_N$ is an embedding into $\Hyp^2\times\Hyp^2$, equivariant for the action induced on $\Hyp^2\times\Hyp^2$ by $\rho$. 
\end{itemize}
\end{lemma}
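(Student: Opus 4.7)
The plan is to verify each assertion in turn and isolate the single nontrivial geometric step in the second bullet.

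First, I would check that $\widetilde\sigma_N$ is well defined and smooth: since $\widetilde\sigma$ is spacelike, $d\widetilde\sigma(T_x\widetilde S)^{\perp}$ is a timelike line in $T_{\widetilde\sigma(x)}\tAdS$, and selecting its unique future-directed unit vector produces a smooth section $N$. The identity $\Pi\circ\widetilde\sigma_N=\widetilde\sigma$ forces $\widetilde\sigma_N$ to be injective and an immersion, hence an embedding. For $\widetilde\rho$-equivariance, differentiating $\widetilde\sigma\circ\tau=\widetilde\rho(\tau)\circ\widetilde\sigma$ shows that $\widetilde\rho(\tau)_*$ sends $d\widetilde\sigma(T_x\widetilde S)$ to $d\widetilde\sigma(T_{\tau\cdot x}\widetilde S)$; since $\widetilde\rho(\tau)$ preserves orientation and time-orientation, it must send $N(x)$ to $N(\tau\cdot x)$. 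Together with~\eqref{eq induced isometry} this yields $\widetilde\sigma_N(\tau\cdot x)=\widetilde\rho(\tau)_*\widetilde\sigma_N(x)$.

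For horizontality, fix $v\in T_x\widetilde S$. Using the horizontal/vertical decomposition of $T\TA$ recalled in Subsection~\ref{unit tangent bundle}, the horizontal component of $d\widetilde\sigma_N(v)$ is $d\widetilde\sigma(v)^h$ and the vertical component is $\nabla_{d\widetilde\sigma(v)}N\in N(x)^\perp$. Since $\chi(\widetilde\sigma(x),N(x))=N(x)^h$ is purely horizontal and the Sasaki metric makes horizontal and vertical subspaces orthogonal,
\[
g_S\bigl(d\widetilde\sigma_N(v),\chi\bigr)=\langle d\widetilde\sigma(v),N(x)\rangle=0
\]
by the defining property of $N(x)$. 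Hence $\omega(d\widetilde\sigma_N(v))=0$ by Definition~\ref{defi connection form}, so $d\widetilde\sigma_N(v)$ lies in the horizontal distribution.

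For the second bullet, $\rho$-equivariance of $\pi\circ\widetilde\sigma_N$ is immediate from Lemma~\ref{lemma equivariance pi}. That the composition is an immersion follows because $\ker d\pi=\R\cdot\chi$, while the image of $d\widetilde\sigma_N$ is horizontal and injective, and hence meets $\R\cdot\chi$ trivially. The only nontrivial point, and the step I expect to be the main obstacle, is global injectivity: if distinct $x_1,x_2\in\widetilde S$ mapped to the same point of $\Hyp^2\times\Hyp^2$, then $\widetilde\sigma(x_1)$ and $\widetilde\sigma(x_2)$ would lie on a common timelike geodesic of $\tAdS$ and hence be causally related. To rule this out I would invoke Mess' description recalled in Subsection~\ref{subsec embeddings}: $\widetilde\sigma$ extends continuously to $\partial_\infty\widetilde S$ with image the acausal curve $\Gamma_i\subset\partial_\infty\tAdS$, the graph of an orientation-preserving homeomorphism of $\RP^1$. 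Standard Anti-de Sitter arguments then force $\widetilde\sigma(\widetilde S)$ to lie in the convex invisible domain of $\Gamma_i$ and to be achronal there, excluding any timelike arc between two of its points. Combined with immersivity and the fact that $\pi\circ\widetilde\sigma_N$ descends to an injective continuous map from the compact surface $S$ --- automatically a homeomorphism onto its image --- this shows $\pi\circ\widetilde\sigma_N$ is an embedding.
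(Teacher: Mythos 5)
Your proposal is correct and follows essentially the same route as the paper: the same Sasaki-metric computation $g_S(\chi,d\widetilde\sigma_N(v))=\langle N(x),d\widetilde\sigma(v)\rangle=0$ for horizontality, immersivity of $\pi\circ\widetilde\sigma_N$ deduced from the first bullet (since $\ker d\pi=\mathrm{Span}(\chi)$ and $g_S(\chi,\chi)=-1$), and global injectivity reduced to achronality of $\widetilde\sigma(\widetilde S)$. The only difference is in sourcing that last point: where you detour through the boundary extension, the curve $\Gamma_i$, and the invisible domain, the paper simply cites Mess's Lemma 6 to assert that $\widetilde\sigma(\widetilde S)$ meets each orbit of the geodesic flow in at most one point.
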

\begin{proof}
The equivariance of $\widetilde \sigma_N$ and $\pi\circ \widetilde \sigma_N$ is a direct consequence of the definitions. It remains to check that for every point $x\in\widetilde S$, the image of the differential of $\widetilde \sigma_N$ at $x$ is in the kernel of $\omega$, or in other words, is orthogonal to $\chi$. Recall that, in the decomposition 
$$T_{(\widetilde\sigma(x),N(x))}\TA\cong \mathcal H_{(\widetilde\sigma(x),N(x))}\oplus \mathcal V_{(\widetilde\sigma(x),N(x))}~,$$
we have $\chi_{(\widetilde\sigma(x),N(x))}=N(x)^h$, while
$$d\widetilde\sigma_N(\dot x)=d\widetilde\sigma(\dot x)^h\oplus \nabla_{d\widetilde\sigma(\dot x)}N~.$$
Hence from the form of the Sasaki metric (Definition \ref{defi sasaki}), we have
$$g_S(\chi,d\widetilde\sigma_N(\dot x))=\langle N(x),d\widetilde\sigma(\dot x)\rangle =0$$
since $N(x)$ is the normal vector of the image of $\widetilde\sigma$ at $x$. This concludes the proof of the first point.

For the second point, $\pi\circ \widetilde \sigma_N$ is an immersion as a consequence of the first point. Moreover, by \cite[Lemma 6]{Mess} $\widetilde \sigma(\widetilde S)$ intersects the orbits of the geodesic flow only in one point, and thus $\pi\circ \widetilde \sigma_N$ is globally injective.
\end{proof}
Hence every $\rho$-equivariant spacelike embedding $\widetilde \sigma:\widetilde S\to\AdS^3$ gives rise to an embedding of $\Lambda:S\to S\times S$ isotopic to the diagonal, and to a parallel section of $P_\rho|_{\Lambda(S)}$. We now provide an example of parallel equivariant embeddings into $\TA$, giving rise to a section of $P_\rho$ over the diagonal in $S\times S$, which is not obtained in this way. 

\begin{example} \label{ex fuchsian}
Let $\rho_0:\pi_1(S)\to\PSLR$ be a Fuchsian representation, and let 
$$\rho=(\rho_0,\rho_0):\pi_1(S)\to\PSLR\times\PSLR~.$$
Then $\rho(\tau)$ fixes the identity of $\PSLR$ for every $\tau$, by  \eqref{eq isometry action}.
Now let us consider $f:\Hyp^2\to\psl_2\R=T_{\mathrm{id}}\tAdS$, which was defined in Equation \eqref{eq defi f}, and define:
$$\widetilde \sigma(x):=(\mathrm{id},f(x))~.$$
By the equivariance of $f$, $\widetilde \sigma$ gives a $\rho$-equivariant embedding of $\Hyp^2$ into $\TA$, such that its composition with the projection $\Pi:\TA\to\tAdS$ gives a constant map $\Pi\circ \widetilde \sigma(x)=\mathrm{id}$. 

It is straightforward from the definition of the Sasaki metric, that $\widetilde \sigma$ is orthogonal to the generator $\chi$ of the geodesic flow. Moreover for every $x$, $\widetilde \sigma(x)$ belongs to the orbit of the geodesic flow which corresponds to the geodesic $L_{x,x}$. Hence 
$$\pi\circ \widetilde \sigma (x)=(x,x)\in\Hyp^2\times\Hyp^2~.$$
Therefore $\widetilde \sigma$ induces a parallel section of the restriction of the bundle $P_{\rho}$ over the diagonal in $S\times S$.

However, let us remark that, for every $t\in(-\pi,\pi)$, 
$$\pi\circ \varphi_{2t}\circ \widetilde \sigma(x)=\exp(tf(x))~.$$
We observe that  the section $\varphi_{2t}\circ \widetilde \sigma$ is still orthogonal to $\chi$ and thus parallel. (In general, applying $\varphi_t$ to a surface of the form $\widetilde \sigma_N(\widetilde S)$ corresponds essentially to acting by the normal flow.) Hence  $\varphi_t\circ \widetilde \sigma$
is an embedding into $\TA$, such that $\Pi\circ \varphi_t\circ \widetilde \sigma$ is also an embedding into $\tAdS$. In particular, for $t=\pm\pi/2$, $\varphi_{\pm\pi/2}\circ \widetilde \sigma$ is an isometric embedding of $\Hyp^2$, with image a totally geodesic plane.


\end{example}

\subsection{Relation between curvature and symplectic form}

Let us now consider the local geometry of the principal bundle $\pi_\rho:P_\rho\to (S, h_l)\times (S, h_r)$. The main result of this section is an expression of the curvature of this bundle, which is a $\R$-valued 2-form, in terms of a natural symplectic form on the base.

The curvature of the principal bundle $\pi_\rho:P_\rho\to S\times S$ is a real-valued 2-form $R_\rho\in\Omega^2(P_\rho,\R)$. Given a vector $X\in T_p P_\rho$, let us denote its horizontal-vertical decomposition, according to the connection form $\omega_\rho$, as
$$X=h(X)+v(X)~,$$
where $\omega_\rho(h(X))=0$ and $\omega_\rho(v(X))=v(X)$. Then by the structure equation for principal bundles, the curvature $R_\rho$
 can be expressed as
\begin{equation} \label{eq curvature}
R_\rho=d\omega_\rho~,
\end{equation}
since the term $\omega_\rho\wedge\omega_\rho$ vanishes in this case. On the other hand, using Lemma \ref{lemma equivariance pi}, the base $S\times S$ of $P_\rho$ is naturally endowed with:
\begin{itemize}
\item The Riemannian metric $h_l\oplus h_r$, which is induced by the metric of $\Hyp^2\times \Hyp^2$.
\item The symplectic form $\Omega_\rho=p_l^*\Omega_l-p_r^*\Omega_r$, where $p_l,p_r:S\times S\to S$ are the left and right projections, and $\Omega_l$ and $\Omega_r$ are the area forms of $h_l$ and $h_r$, respectively.
\end{itemize}
\begin{prop} \label{prop curvature}
Given any representation $\rho=(\rho_l,\rho_r):\pi_1(S)\to\PSLR\times\PSLR$ with $\rho_l,\rho_r$ Fuchsian, the curvature of $\pi_\rho:P_\rho\to S\times S$ is:
$$R_\rho=\frac{1}{2}\pi_\rho^*\Omega_\rho~,$$
where $\Omega_\rho=p_l^*\Omega_l-p_r^*\Omega_r$ and $\Omega_l,\Omega_r$ are the area forms induced on $S$ by the area form of $\Hyp^2$.
\end{prop}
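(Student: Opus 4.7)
Since $P_\rho$ is the quotient of $\pi\colon \TA \to \Hyp^2 \times \Hyp^2$ by the free and properly discontinuous action of $\widetilde\rho(\pi_1(S))$, and both $\omega_\rho$ and $\Omega_\rho$ descend from their counterparts upstairs, it suffices to establish the identity $d\omega = \tfrac{1}{2}\pi^*\widetilde\Omega$ on $\TA$, where $\widetilde\Omega := p_l^*\Omega_{\Hyp^2} - p_r^*\Omega_{\Hyp^2}$ with $\Omega_{\Hyp^2}$ the hyperbolic area form. Both 2-forms are $\isom_0(\tAdS)$-invariant: $\omega = -g_S(\chi,\cdot)$ is invariant because the Sasaki metric and the geodesic-flow vector field $\chi$ are both preserved by lifted isometries (Theorem \ref{thm geodesicflow}), while $\pi^*\widetilde\Omega$ is invariant by Lemma \ref{lemma equivariance pi} combined with the $\PSLR$-invariance of $\Omega_{\Hyp^2}$. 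Since $\isom_0(\tAdS)$ acts transitively on $\TA$, the identity reduces to a check at the single point $(\mathrm{id}, u_0)\in\TA$ for some unit future timelike vector $u_0 \in \psl_2\R$.

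Fix an orthonormal basis $(u_0, e_1, e_2)$ of $\psl_2\R$. For each $b \in \psl_2\R$, left and right translations on $\USL$ yield Killing fields $b^L$ and $b^R$ on $\tAdS$ whose canonical lifts to $\TA$ remain Killing for $g_S$; at $(\mathrm{id}, u_0)$ the five vectors $\chi = u_0^L$, $e_1^L, e_2^L, e_1^R, e_2^R$ span the tangent space. Since $\omega$ is preserved by every Killing field, its Lie derivative vanishes, and Cartan's formula $d\omega(X,Y) = X\omega(Y) - Y\omega(X) - \omega([X,Y])$ becomes explicit: in the left-trivialization $u = (L_\gamma)_* a$ one has $\omega(b^L)|_{(\gamma, a)} = -\langle\mathrm{Ad}(\gamma) a, b\rangle$ and $\omega(b^R)|_{(\gamma, a)} = -\langle a, b\rangle$, while the brackets of the Killing fields satisfy $[b^L, c^L] = -[b,c]^L$, $[b^R, c^R] = [b,c]^R$, $[b^L, c^R] = 0$. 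On the symplectic side, differentiating the expressions of Lemma \ref{lemma projection lie algebra} along these flows gives, at $(\mathrm{id}, u_0)$, $dp_l(b^L) = df^{-1}([b, u_0])$, $dp_l(b^R) = 0$, $dp_r(b^L) = 0$, and $dp_r(b^R) = -df^{-1}([b, u_0])$. Hence $\pi^*\widetilde\Omega$ decouples in exactly the same pattern as $d\omega$: only $p_l^*\Omega_{\Hyp^2}$ contributes on $L$-$L$ pairs, only $p_r^*\Omega_{\Hyp^2}$ on $R$-$R$ pairs, and both vanish on mixed $L$-$R$ pairs.

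Both calculations then collapse to the bracket identity $\langle u_0, [b, c]\rangle = 2\,\Omega_{u_0^\perp}(b, c)$, which follows from the relations $[u_0, e_1] = 2e_2$, $[u_0, e_2] = -2e_1$, $[e_1, e_2] = -2u_0$ characterizing the Lorentzian Lie algebra $(\psl_2\R, \tfrac{1}{8}\kappa)$ of sectional curvature $-1$; combined with the fact that $f\colon \Hyp^2 \to \psl_2\R$ is an isometric embedding, this produces the coefficient $\tfrac{1}{2}$ in the statement. The main obstacle is purely bookkeeping: one must carefully track the numerical factor $2$ arising from the metric normalization (which ultimately yields the $\tfrac{1}{2}$), the opposite signs of the $l$- and $r$-contributions (matching the minus sign in $\Omega_\rho = p_l^*\Omega_l - p_r^*\Omega_r$), and the orientation of $f$, so that its pullback of the hyperbolic area form matches the natural orientation on $u_0^\perp$ on each of the two factors.
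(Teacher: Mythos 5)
Your proposal is correct and follows essentially the same route as the paper's proof: reduce by the equivariance of $\pi$ and the isometric action of $\isom_0(\tAdS)$ to a pointwise computation, split $T\TA$ into the left- and right-invariant three-dimensional distributions meeting along $\chi$, evaluate $d\omega$ via Cartan's formula and $d\pi$ via Lemma \ref{lemma projection lie algebra} (yielding the same $L$--$L$ / $R$--$R$ / mixed case pattern, with the sign flip matching $\Omega_\rho=p_l^*\Omega_l-p_r^*\Omega_r$ and vanishing mixed terms since left- and right-translation fields commute), and extract the factor $\tfrac{1}{2}$ from the identity $[v,w]=2\,v\boxtimes w$ relating the bracket to the area form on $u_0^\perp$. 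The only (harmless) difference is your choice of frame: you use lifted Killing fields of left/right translations, along which $\omega$ is not constant, so the derivative terms in Cartan's formula contribute and must be computed from your formulas for $\omega(b^L)$ and $\omega(b^R)$, whereas the paper pushes forward left-/right-invariant fields through the foliation sections $\Sigma^L_u$, $\Sigma^R_u$, along which $\omega$ is constant, so that only the bracket term survives --- both frames span the same distributions and lead to the same bookkeeping of signs and the coefficient $2$.
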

For the proof of Proposition \ref{prop curvature}, we will use two foliations of $\TA$ whose leafs are three-dimensional, namely the \emph{left-invariant} foliation $\cF^L=\{\cF^L_u:u\in f(\Hyp^2)\}$ and \emph{right-invariant} foliation $\cF^R=\{\cF^R_u:u\in f(\Hyp^2)\}$, with leaves parameterized by the image of the isometric embedding  $f:\Hyp^2\to \psl_2\R$. Let us define the left-invariant foliation. Consider, for any fixed future unit vector $u\in\psl_2\R$, the section $\Sigma^L_u:\tAdS\to\TA$ of the bundle $\Pi:\TA\to \tAdS$:
$$g\in\tAdS\mapsto \Sigma_u^L(g):= (g,(L_g)_*(u))~,$$
where $L_g$ is the left multiplication for the Lie group structure of $\tAdS$. Then define 
$$\cF^L_u=\Sigma_u^L(\tAdS)~.$$
Observe that $(\mathrm{id},u)\in\cF^L_u$ and that the submanifolds $\cF^L_u$ foliate $\TA$ as $u$ varies in $\Hyp^2$, identified to the subset of future normal vectors in $\psl_2\R$ by the usual isometric embedding $f:\Hyp^2\to\psl_2\R$ (see Equation \eqref{eq defi f}).

Analogously, we define $\Sigma_u^R(g):= (g,(R_g)_*(u))$ and
$$\cF^R_u=\Sigma_u^R(\tAdS)~.$$
Let us moreover denote $\mathcal D^L_{(\gamma,u)}$ and $\mathcal D^R_{(\gamma,u)}$ the tangent distributions of the foliations $\cF^L$ and $\cF^R$ respectively, through the point
$(\gamma,u)$.


\begin{lemma} \label{lemma generate tangent}
The intersection of two leaves $\cF^L_u$ and $\cF^R_{u'}$ is an orbit of the geodesic flow on $\TA$. 
In particular, given any $(\gamma,u)\in\TA$,
$$\cD^L_{(\gamma,u)}\cap \cD^R_{(\gamma,u)}=\mathrm{Span}(\chi(\gamma,u))~,$$
and therefore
$$T_{(\gamma,u)}\TA=\cD^L_{(\gamma,u)}+\cD^R_{(\gamma,u)}~.$$
\end{lemma}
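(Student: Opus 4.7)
The plan is to reduce everything to Lemma \ref{lemma projection lie algebra} and a short computation in the Lie algebra $\psl_2\R$ exploiting the bi-invariance of the metric of $\tAdS$.

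First I would prove the statement about leaf intersections. By construction, $(\gamma,v)\in\cF^L_{\bar u}$ if and only if $(L_{\gamma^{-1}})_* v=\bar u$, which by Lemma \ref{lemma projection lie algebra} is equivalent to saying that the second component of $\pi(\gamma,v)$ equals $f^{-1}(\bar u)$. Symmetrically, $\cF^R_{\bar u'}$ consists of those $(\gamma,v)$ whose first component under $\pi$ equals $f^{-1}(\bar u')$. Hence
\[
\cF^L_{\bar u}\cap \cF^R_{\bar u'}=\pi^{-1}\bigl(f^{-1}(\bar u'),\,f^{-1}(\bar u)\bigr),
\]
which is a single fiber of the principal $\R$-bundle $\pi$, and therefore a single orbit of the geodesic flow.

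Given $(\gamma,u)\in\TA$, set $\bar u=(L_{\gamma^{-1}})_* u$ and $\bar u'=(R_{\gamma^{-1}})_* u$; the above then shows that the orbit of $\chi$ through $(\gamma,u)$ is contained in both $\cF^L_{\bar u}$ and $\cF^R_{\bar u'}$, so $\mathrm{Span}(\chi(\gamma,u))\subseteq \cD^L_{(\gamma,u)}\cap \cD^R_{(\gamma,u)}$. Once equality is established, the third assertion follows by the dimension count $\dim \cD^L=\dim \cD^R=\dim\tAdS=3$ and $\dim T\TA=5$. To handle the reverse inclusion it suffices to work at a single base point, since the isometry of $\tAdS$ induced by $L_{\gamma^{-1}}$ sends $(\gamma,u)$ to some $(\mathrm{id},u_0)$ with $u_0\in\psl_2\R$ and preserves the foliations $\cF^L$ and $\cF^R$, the vector field $\chi$, and the Sasaki horizontal/vertical splitting of $T\TA$.

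The main computation is therefore the horizontal--vertical decomposition of $d\Sigma^L_{u_0}$ and $d\Sigma^R_{u_0}$ at $(\mathrm{id},u_0)$. Using that the Levi-Civita connection of a bi-invariant metric satisfies $\nabla_X Y=\frac{1}{2}[X,Y]$ on left-invariant vector fields, a direct calculation along the 1-parameter subgroup $t\mapsto\exp(tw)$ yields, for $w\in\psl_2\R=T_{\mathrm{id}}\tAdS$,
\[
d\Sigma^L_{u_0}(w)=w^h+\tfrac{1}{2}[w,u_0]^v,\qquad d\Sigma^R_{u_0}(w)=w^h-\tfrac{1}{2}[w,u_0]^v,
\]
where $[w,u_0]\in u_0^\perp$ (by invariance of the Killing form) is viewed as a vertical vector at $(\mathrm{id},u_0)$. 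Any element of $\cD^L_{(\mathrm{id},u_0)}\cap \cD^R_{(\mathrm{id},u_0)}$ must then arise from the same $w$ in both expressions and satisfy $[w,u_0]=0$. The centralizer of the timelike unit vector $u_0\in\psl_2\R$ is the one-dimensional line $\R u_0$ (the Lie algebra of the maximal compact subgroup fixing the corresponding point of $\Hyp^2$), so $w=\lambda u_0$ and the resulting tangent vector equals $\lambda u_0^h=\lambda\,\chi(\mathrm{id},u_0)$, as required. The main obstacle is precisely this local calculation: the sections $\Sigma^L_{\bar u}, \Sigma^R_{\bar u'}$ are not horizontal sections of $\Pi:\TA\to\tAdS$, so identifying their vertical parts requires a careful use of parallel transport along 1-parameter subgroups, relying essentially on the bi-invariance of the metric on $\tAdS$.
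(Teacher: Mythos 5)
Your proposal is correct, and it reaches both assertions by a genuinely different route than the paper. For the set-level statement, the paper first uses the equivariance $(\alpha,\beta)_*\cF^L_u=\cF^L_{\mathrm{Ad}\beta(u)}$ and $(\alpha,\beta)_*\cF^R_u=\cF^R_{\mathrm{Ad}\alpha(u)}$ to reduce to the case $u=u'$, and then solves the equation $\mathrm{Ad}(g)(u)=u$ algebraically: $g$ must commute with the elliptic one-parameter subgroup $\exp(tu)$, hence belongs to it, so the intersection is the geodesic-flow orbit through $(\mathrm{id},u)$. Your identification of the leaves as preimages, $\cF^L_{\bar u}=\pi^{-1}\bigl(\Hyp^2\times\{f^{-1}(\bar u)\}\bigr)$ and $\cF^R_{\bar u'}=\pi^{-1}\bigl(\{f^{-1}(\bar u')\}\times\Hyp^2\bigr)$, via Lemma \ref{lemma projection lie algebra}, is arguably cleaner: it treats $u\neq u'$ uniformly, with no reduction step, and exhibits the intersection directly as a fiber of the principal bundle. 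It also buys more at the infinitesimal level than you actually use: since the leaves are fibers of the submersions obtained by composing $\pi$ with the two factor projections, one gets $\cD^L_{(\gamma,u)}\cap\cD^R_{(\gamma,u)}=\ker d\pi_{(\gamma,u)}=\mathrm{Span}(\chi(\gamma,u))$ immediately, with no connection computation; this is worth noting because the paper's own passage from the set-level intersection to the distribution-level one is terse (a priori, tangent spaces of two submanifolds can intersect in more than the tangent space of their intersection, and it is precisely the fibration description that makes this step automatic). Your alternative explicit computation $d\Sigma^L_{u_0}(w)=w^h+\frac{1}{2}[w,u_0]^v$ and $d\Sigma^R_{u_0}(w)=w^h-\frac{1}{2}[w,u_0]^v$ is correct as stated --- bi-invariance gives $\nabla_{X}Y=\frac{1}{2}[X,Y]_L$ on left-invariant fields and $\nabla_{X}Y=-\frac{1}{2}[X,Y]_L$ on right-invariant ones, the invariance of the Killing form gives $[w,u_0]\in u_0^\perp$ so the vertical part is well defined, and the centralizer of a timelike $u_0$ in $\psl_2\R$ is indeed $\R u_0$ since $\mathrm{ad}(u_0)$ acts invertibly on $u_0^\perp$ --- and it has the side benefit of making the vertical parts of the leaf sections explicit, in the same spirit as the computations in the proof of Proposition \ref{prop curvature}; the paper's argument avoids this local calculation entirely, at the cost of the quicker tangent-level inference.
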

\begin{proof}
First of all, we show that it suffices to prove the first claim when $u=u'$. In fact, it can be easily checked that, for any isometry of the form $(\alpha,\beta)\in\USL\times\USL$,
$$(\alpha,\beta)_* \Sigma^L_u(g)=\Sigma^L_{\mathrm{Ad}\beta(u)}(\alpha g \beta^{-1})~,$$
and therefore $(\alpha,\beta)_* \cF^L_u=\cF^L_{\mathrm{Ad}\beta(u)}$.
Similarly, $(\alpha,\beta)_* \cF^R_u=\cF^R_{\mathrm{Ad}\alpha(u)}$.
Hence, by applying the action of an element of $\isom_0(\TA)$, which clearly maps orbits of the geodesic flow to orbits of the geodesic flow, we can reduce to the case $u=u'$.

Now, observe that $(g,v)\in \cF^L_u\cap \cF^D_u$ if and only if $v=(L_g)_*(u)=(R_g)_*(u)$. Hence $\mathrm{Ad}g(u)=u$, and therefore $g$ commutes with the 1-parameter elliptic subgroup $g_t:=\{\exp(tu):t\in\R\}$. Thus $g=g_t$ for some $t$. This shows that $\cF^L_u\cap \cF^D_u$ consists precisely of the orbit of the geodesic flow through $(\mathrm{id},u)$. 

Since the orbits of the geodesic flow are generated by $\chi$, it follows that in terms of tangent distributions,
$$\cD^L_{(\gamma,u)}\cap \cD^R_{(\gamma,u)}=\mathrm{Span}(\chi(\gamma,u))~,$$
and in particular the planes  $\cD^L_{(\gamma,u)}$ and $\cD^R_{(\gamma,u)}$ generate the tangent space of $\TA$ at the point $(\gamma,u)$.
\end{proof}


\begin{proof}[Proof of Proposition \ref{prop curvature}]
As the statement has a local nature, we can work with the bundle $\pi:\TA\to\Hyp^2\times \Hyp^2$ in a neighborhood of a point $(\gamma,u)$. Moreover, using Lemma \ref{lemma equivariance pi} and the fact that the action of $\isom_0(\tAdS)$ preserves the connection $\omega$, and the action of $\PSLR\times\PSLR$ preserves the symplectic form $\Omega=p_l^*(d\mathrm{A}_{\Hyp^2})-p_r^*(d\mathrm{A}_{\Hyp^2})$, we can assume that $\gamma=\mathrm{id}$.

Let $R\in\Omega^2(\TA,\R)$. Since $\mathcal D^L_{(\mathrm{id},u)}$ and $\mathcal D^R_{(\mathrm{id},u)}$ generate $T_{(\mathrm{id},u)}\TA$ by Lemma \ref{lemma generate tangent}, it suffices to check that 
$$2R(X,Y)=\Omega(d\pi(X),d\pi(Y))~,$$
when $X,Y$ are both in $\mathcal D^L_{(\mathrm{id},u)}$, both in $\mathcal D^R_{(\mathrm{id},u)}$, or $X\in \mathcal D^L_{(\mathrm{id},u)}$ and $Y\in \mathcal D^R_{(\mathrm{id},u)}$. 
\\

\paragraph{\textbf{Case 1}} Let us first suppose $X,Y\in \mathcal D^L_{(\mathrm{id},u)}$. Hence $X=(d\Sigma^L_u)_{\mathrm{id}}(v)$ and $Y=(d\Sigma_u^L )_{\mathrm{id}}(w)$ for some $v,w\in\psl_2\R$. 
Observe that, if we extend $v$ and $w$ to left-invariants vector fields $v^l,w^l$, then for every $g\in\tAdS$, 
$$\omega((d\Sigma_u^L)_{\mathrm{g}}(v^l))=g_S(\chi_{\Sigma_u^L(g)},(d\Sigma_u^L)_{\mathrm{g}}(v^l))=\langle (L_g)_*(u),(L_g)_*(v)\rangle~,$$
since, in the horizontal-vertical decomposition of $\TA$, $\chi_{(\gamma,u)}=u^h$, the horizontal component of $(d\Sigma_u^L)_{\mathrm{id}}(v^l)$ equals the horizontal lift of $v^l=(L_g)_*(v)$, and we applied the definition of the Sasaki metric (Definition \ref{defi sasaki}). By left-invariance of the Killing form, it follows that:
$$\omega((d\Sigma_u^L)_{\mathrm{g}}(v^l))=\langle u,v\rangle~.$$
In particular, $\omega((d\Sigma_u^L)_{\mathrm{g}}(v^l))$ is a constant function of $g\in\tAdS$. 

Let us decompose $v=v_0+\lambda u$ and $w=w_0+\mu u$ with $v_0,w_0\in u^\perp$. Then we have, using Cartan's formula:
\begin{equation} \label{eq comp curvature}
\begin{aligned}
R(X,Y)&=d\omega(d\Sigma_u^L(v),d\Sigma_u^L(w)) \\
&=d\Sigma_u^L(v).\omega(d\Sigma_u^L(w^l))-d\Sigma_u^L(w).\omega(d\Sigma_u^L(v^l))-\omega[d\Sigma_u^L(v^l),d\Sigma_u^L(w^l)] \\
&=-\omega(d\Sigma_u^L[v^l,w^l])=-\langle u,[v,w]_L\rangle=-2\langle u,v\boxtimes w\rangle \\
&=-2\langle u,v_0\boxtimes w_0\rangle=2(d\mathrm{A}_{\Hyp^2})(v_0,w_0)~.
\end{aligned}
\end{equation}
Here, from the second to the third line we used that $\omega(d\Sigma_u^L(v^l))$ and $\omega(d\Sigma_u^L(w^l))$ are constant functions. In the third line we used that the Lie bracket of left-invariant vector fields coincides with the bracket of Lie algebra, which is the same as the cross-product for the Lorentzian metric on the Lie algebra, up to a factor. Namely 
\begin{equation} \label{eq cross product bracket}
[v,w]_L=2v\boxtimes w~,
\end{equation} 
where $\boxtimes$ is the Lorentzian cross-product which is uniquely determined by the condition $\langle v\boxtimes w,u\rangle=d\mathrm{Vol}(v,w,u)$.
Equation \eqref{eq cross product bracket} is a consequence of the arguments explained in \cite[§2.1]{ksurfaces}.

On the other hand, by Lemma \ref{lemma projection lie algebra}, we have
$$(f,f)\circ \pi\circ \Sigma_u^L(g)=((R_{g^{-1}})_*(L_g)_*u,u)=(\mathrm{Ad}(g)u,u)\in f(\Hyp^2)\times f(\Hyp^2)\subset\psl_2\R\times \psl_2\R~.$$
Hence, recalling that $X=(d\Sigma_u^L)_{\mathrm{id}}(v)$ and $Y=(d\Sigma_u^L)_{\mathrm{id}}(w)$, we have
$$(df,df)\circ d\pi_{\mathrm{id,u}}(X)=([u,v]_L,0)=(2u\boxtimes v,0)=2(u\boxtimes v_0,0)~.$$
Since the embedding $f$ of $\Hyp^2$ in $\psl_2\R$ satisfies $df_u(Jv_0)=u\boxtimes v_0$,
where $J$ is the almost-complex structure of $\Hyp^2$, we have
\begin{equation} \label{eq proj left invariant case 1}
d\pi_{\mathrm{id,u}}(X)=2(Jv_0,0)~.
\end{equation}
Analogously
\begin{equation} \label{eq proj left invariant case 12}
d\pi_{\mathrm{id,u}}(Y)=2(Jw_0,0)~.
\end{equation} 
In conclusion, putting together Equations \eqref{eq comp curvature}, \eqref{eq proj left invariant case 1} and \eqref{eq proj left invariant case 12}, one obtains:
$$\pi^*\Omega(X,Y)=\Omega(d\pi_{\mathrm{id,u}}(X),d\pi_{\mathrm{id,u}}(Y))=4(d\mathrm{A}_{\Hyp^2})(Jv_0,Jw_0)=4(d\mathrm{A}_{\Hyp^2})(v_0,w_0)=2R(X,Y)~,$$
as claimed.
\\

\paragraph{\textbf{Case 2}}
When $X$ and $Y$ are in $\mathcal D^R_{(\mathrm{id},u)}$, that is $X=(d\Sigma_u^R)_{\mathrm{id}}(v)$ and $Y=(d\Sigma_u^R)_{\mathrm{id}}(w)$, the proof goes in a similar way. Extending $v$ and $w$ to right-invariant vector fields $v^r,w^r$, one has analogously 
$$\omega((d\Sigma_u^R)_{\mathrm{g}}(v^r))=\langle u,v\rangle~.$$
Therefore, with the same decomposition $v=v_0+\lambda u$ and $w=w_0+\mu u$, one shows
$$R(X,Y)=d\omega(d\Sigma_u^R(v),d\Sigma_u^R(w))
=-\omega(d\Sigma_u^R[v^r,w^r])=\langle u,[v,w]_L\rangle=-2(d\mathrm{A}_{\Hyp^2})(v_0,w_0)~,$$
where the difference in sign with respect to the previous case is due to the fact that, for right-invariant vector fields,
$[v^r,w^r]=-[v,w]_L$, where $[v,w]$ is the Lie algebra bracket, defined using left-invariant extensions.

On the other hand, in this case $(f,f)\circ \pi\circ \Sigma_u^R(g)=(u,\mathrm{Ad}(g^{-1})u)$, whence 
\begin{equation} \label{eq proj right invariant case 2}
d\pi_{\mathrm{id,u}}(X)=2(0,-Jv_0)\qquad\textrm{and}\qquad d\pi_{\mathrm{id,u}}(Y)=2(0,-Jw_0)~.
\end{equation}
 Thus 
$$\pi^*\Omega(X,Y)=-4(d\mathrm{A}_{\Hyp^2})(-Jv_0,-Jw_0)=-4(d\mathrm{A}_{\Hyp^2})(v_0,w_0)=2R(X,Y)~,$$
and this concludes the second case.
\\

\paragraph{\textbf{Case 3}}
Finally, if $X\in \mathcal D^L_{(\mathrm{id},u)}$ and $Y\in \mathcal D^R_{(\mathrm{id},u)}$, we know from Equation \eqref{eq proj left invariant case 1} that  $d\pi_{\mathrm{id,u}}(X)=2(Jv_0,0)$ and from Equation \eqref{eq proj right invariant case 2} that $d\pi_{\mathrm{id,u}}(Y)=2(0,-Jw_0)$, hence $\pi^*\Omega(X,Y)=0$.

When computing the curvature, in this case we can write $X=(d\Sigma_u^L)_{\mathrm{id}}(v)$ and $Y=(d\Sigma_u^R)_{\mathrm{id}}(w)$, and extend $v$ to a left-invariant $v^l$ and $w$ to a right-invariant $w^r$. Hence one has again 
\begin{align*}
R(X,Y)&=d\omega(d\Sigma_u^L(v),d\Sigma_u^R(w))  \\
&=d\Sigma_u^L(v).\omega(d\Sigma_u^L(w^r))-d\Sigma_u^R(w).\omega(d\Sigma_u^L(v^l))-\omega[d\Sigma_u^L(v^l),d\Sigma_u^R(w^r)] \\
&=-\omega[d\Sigma_u^L(v^l),d\Sigma_u^R(w^r)]=0~,
\end{align*}
since left-invariant and right-invariant vector fields commute. This concludes the proof.
\end{proof}

Recall that, given a symplectic manifold $(M,\Omega)$ with $\dim M=2n$, and an embedding $\Lambda:N\to M$ with $\dim N=n$, then $\Lambda$ is \emph{Lagrangian} if $\Lambda^*\Omega=0$. We will denote by $\Lambda^*P_\rho$ the pull-back bundle, that is, the bundle on $S$ defined by the following commutative diagram:
\begin{equation} \label{eq diagram pullback}
\xymatrix{
 \Lambda^*P_\rho \ar[r]^-{i} \ar[d] & P_\rho \ar[d]^-{\pi} \\
S \ar[r]^-{\Lambda} & S\times S \\
}
\end{equation}
Hence with this notation, the inclusion $i:\Lambda^*P_\rho\to P_\rho$ induces a connection form on $P_\rho$, which is again a $\R$-principal bundle.
From Proposition \ref{prop curvature} and the definition of Lagrangian embedding, we have:

\begin{cor} \label{cor lagrangian}
Let $\rho=(\rho_l,\rho_r):\pi_1(S)\to\PSLR\times\PSLR$ be a representation with $\rho_l,\rho_r$ Fuchsian, and let $\Lambda:S\to (S\times S,\Omega_\rho)$ be an embedding. Then $\Lambda$ is Lagrangian if and only if $(\Lambda^*P_\rho,i^*\omega_\rho)$ is flat.
\end{cor}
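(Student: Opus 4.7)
The strategy is to combine the structure equation with Proposition \ref{prop curvature}, then exploit the commutativity of the pullback diagram \eqref{eq diagram pullback}.

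First, I would recall that since $\R$ is abelian, the structure equation for the principal $\R$-bundle $\pi_\rho : P_\rho \to S \times S$ with connection $\omega_\rho$ reads simply $R_\rho = d\omega_\rho$, because the bracket term in the Maurer--Cartan formula vanishes (this was already noted in \eqref{eq curvature}). The same observation applies to the pulled-back bundle $\Lambda^*P_\rho$ with connection $i^*\omega_\rho$, whose curvature is therefore
$$d(i^*\omega_\rho) = i^* d\omega_\rho = i^* R_\rho.$$
By Proposition \ref{prop curvature}, this equals $\tfrac{1}{2}\, i^* \pi_\rho^* \Omega_\rho$.

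Next, denote by $q : \Lambda^*P_\rho \to S$ the vertical projection in diagram \eqref{eq diagram pullback}, so that $\pi_\rho \circ i = \Lambda \circ q$. Then
$$i^* R_\rho = \tfrac{1}{2}\, i^* \pi_\rho^* \Omega_\rho = \tfrac{1}{2}\, q^* \Lambda^* \Omega_\rho.$$
The bundle $(\Lambda^*P_\rho, i^*\omega_\rho)$ is flat precisely when the left-hand side vanishes, while $\Lambda$ is Lagrangian precisely when $\Lambda^*\Omega_\rho = 0$. The forward implication is immediate: if $\Lambda^*\Omega_\rho = 0$, then $i^* R_\rho = 0$.

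For the converse, I would use that $q : \Lambda^*P_\rho \to S$ is a surjective submersion (being a principal $\R$-bundle projection), hence $q^*$ is injective on differential forms on $S$. Therefore $q^* \Lambda^* \Omega_\rho = 0$ forces $\Lambda^* \Omega_\rho = 0$. I do not anticipate any real obstacle: the argument is purely formal once Proposition \ref{prop curvature} is in hand, and it amounts to chasing the diagram \eqref{eq diagram pullback}.
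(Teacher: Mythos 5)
Your proof is correct and follows essentially the same route as the paper, which deduces the corollary directly from Proposition \ref{prop curvature} together with the definition of a Lagrangian embedding, via exactly the identity $i^*R_\rho=\tfrac{1}{2}\,q^*\Lambda^*\Omega_\rho$ implicit in the diagram \eqref{eq diagram pullback}. The only detail you add to the paper's one-line deduction is the explicit justification that $q^*$ is injective on forms because $q$ is a surjective submersion, which the paper leaves implicit.
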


\begin{remark}
Recall that we have already produced several examples of $\rho$-equivariant embeddings $\widetilde\sigma:\widetilde S\to\TA$, with the property that the image of the differential of $\widetilde\sigma$ is in the horizontal distribution. For instance those given by lifting an equivariant embedding into $\tAdS$ (Lemma \ref{lemma lift embedding hor}), or in Example \ref{ex fuchsian}. 

By $\rho$-equivariance, $\pi\circ\widetilde\sigma$ induces an embedding (say, $\Lambda_{\widetilde \sigma}$) of $S$ into $S\times S$, where $S\times S$ is endowed with the symplectic form $\Omega_\rho$.  By Corollary \ref{cor lagrangian}, $\Lambda_{\widetilde \sigma}$ is a Lagrangian embedding. Hence $\widetilde \sigma$ induces a Lagrangian submanifold in $(S\times S,\Omega_\rho)$. In \cite{barbotkleinian} it has been already proved that the submanifold $\Lambda_{\widetilde \sigma}(S)$ is Lagrangian, by means of different arguments. In the present work, we will deal with the \emph{global} character of the problem, thus giving an obstruction to find a global parallel section over a given Lagrangian submanifold of $(S\times S,\Omega_\rho$). The \emph{local} theory was already clarified in \cite[§3]{barbotkleinian}.

As a particular case, suppose $\sigma:\widetilde S\to\AdS^3$, $\widetilde\sigma$ is its lift to $\tAdS$ and $\widetilde \sigma_N$ is the associated map into $\TA$. It is known from \cite{Schlenker-Krasnov} that, if $\sigma(\widetilde S)$ has curvature different from zero at every point, then the image of the embedding $\Lambda:S\to S\times S$ is a graph over $S$, that is $\Lambda(x)=(x,\varphi(x))$ for a diffeomorphism $\varphi$ isotopic to the identity. Moreover, $0=\Lambda^*\Omega=\Omega_l-\varphi^*\Omega_r$. Hence one recovers the already known result that $\varphi:(S,\Omega_l)\to(S,\Omega_r)$ is a symplectomorphism isotopic to the identity.
\end{remark}

\section{Flat subbundles and their holonomy} \label{sec holonomy}

In this section we study the symplectic geometry of $(S\times S,\Omega_\rho)$, by means of the flux map for Lagrangian embeddings $\Lambda:S\to (S\times S,\Omega_\rho)$, and we relate it to the holonomy of the flat principal $\R$-bundles $\Lambda^*P_\rho$.

\subsection{Lagrangian embeddings}

Let us consider the space of Lagrangian embeddings of $S$ into $(S\times S,\Omega_\rho)$ isotopic to the diagonal
$$\Delta:x\in S\mapsto (x,x)\in S\times S~.$$ 
That is, we define:
$$\mathcal M_\rho:=\{\Lambda:S\to (S\times S,\Omega_\rho)\,:\,\Lambda^*\Omega_\rho=0, \Lambda\sim \Delta\}~.$$
There is a right action on $\mathcal M_\rho$ of the group
$$\mathrm{Diff}_0(S):=\{\varphi:S\to S\text{ diffeomorphism},\varphi\sim\mathrm{id}\}~,$$
by pre-composition:
$$\mathcal M_\rho\times \mathrm{Diff}_0(S)\to \mathcal M_\rho \qquad (\Lambda,\varphi)\mapsto \Lambda\circ\varphi~.$$
Hence we define
$$\mathcal L_\rho:=\mathcal M_\rho/\mathrm{Diff}_0(S)~.$$
We say that a path $f_\bullet:[0,1]\to\mathcal L_\rho$ is smooth if there exists a smooth lift $\Lambda_\bullet:[0,1]\to\mathcal M_\rho$ such that $f_t=[\Lambda_t]$.

Now, let $\Lambda_\bullet:[0,1]\to\mathcal M_\rho$ be a piecewise smooth path of Lagrangian embeddings. Consider a loop $\ell:\mathbb{S}^1\to S$, with $\ell(1)=x_0$, where we adopted the notation $\mathbb{S}^1=\{e^{2\pi i s}:s\in [0,1]\}$. Let 
$$F_\ell:\mathbb{S}^1\times [0,1]\to (S\times S,\Omega_\rho)$$
be defined by
\begin{equation} \label{defi fell}
F_\ell(e^{2\pi i s},t)=\Lambda_t(\ell(e^{2\pi i s}))~.
\end{equation}
By \cite[Lemma 6.1]{MR3124936}, the integral
$$\int_{\mathbb{S}^1\times [0,1]}F_\ell^*\Omega_\rho=\int_{\mathbb{S}^1\times [0,1]}\Omega_\rho\left(\frac{dF_\ell}{ds},\frac{dF_\ell}{dt}\right)dsdt$$
only depends on the homotopy class of the loop $\ell$ and on the homotopy class of the path $[\Lambda_\bullet]:[0,1]\to \mathcal L_\rho$. This justifies the following definition:

\begin{defi} \label{defi flux for paths}
Given a piecewise smooth path $[\Lambda_\bullet]:[0,1]\to\mathcal L_\rho$, define
$$\Flux:C^\infty([0,1],\mathcal L_\rho)\to\Hom(\pi_1(S),\R)$$
as the function
$$\Flux([\Lambda_\bullet]):\tau\mapsto \int_{\mathbb{S}^1\times [0,1]}F_\ell^*\Omega_\rho~,$$
where $F_\ell(e^{2\pi i s},t)=\Lambda_t(\ell(e^{2\pi i s}))$ and $\tau=[\ell]\in\pi_1(S)$.
\end{defi}

The above construction is very general in symplectic geometry. However, as we shall see later, in our special case the map $\Flux$ does not even depend on the homotopy class of the path $\Lambda_\bullet$, but only on its endpoints $\Lambda_0$ and $\Lambda_1$.

\subsection{Variation of the holonomy}

By Corollary \ref{cor lagrangian}, given any $\Lambda\in\mathcal M_\rho$, the bundle $\Lambda^*P_\rho$ is a flat principal $\R$-bundle, that is, $\Lambda^*P_\rho$ admits trivializations with transition maps which are translations. Moreover, if $[\Lambda_1]=[\Lambda_2]\in \mathcal L_\rho$, then $(\Lambda_1)^*P_\rho$ and $(\Lambda_2)^*P_\rho$ are isomorphic as flat $\R$-bundles.

Let us recall the definition of \emph{holonomy} in this case.

\begin{defi} \label{defi holonomy}
Given an affine $\R$-bundle $E\to S$ with a flat connection $\omega$, the \emph{holonomy} is the representation 
$\hol_E:\pi_1(S)\to\R$
such that for every loop $\ell:\mathbb{S}^1\to S$ with $\ell(1)=x_0$, and for every section $p:[0,1]\to E$ such that $p(s)\in E_{\ell(e^{2\pi is})}$ and $\omega(p'(s))=0$, then $\hol_E([\ell])=t_0$ where $t_0\in\R$ is the unique value such that
$$p(0)=\varphi_{t_0}\circ p(1)~.$$
\end{defi}
In fact, due to the condition of flatness of the connection $\omega$, it turns out that the definition of $\hol_E:\pi_1(S)\to\R$ does not depend on the representative $\ell$ of an element $\tau\in\pi_1(S)$, nor on the initial point $p(0)$. We say that a flat $\R$-bundle $E\to S$ is \emph{trivial} if its holonomy is the trivial representation. Indeed, this is equivalent to saying that $E$ admits a parallel global section.

Observe that, since the structure group of flat $\R$-bundles is contractible, the bundles of the form $\Lambda^*P_\rho$ are always topologically trivial.

\begin{prop} \label{prop holonomy flat}
Let $\rho=(\rho_l,\rho_r):\pi_1(S)\to\PSLR\times\PSLR$ be a representation with $\rho_l,\rho_r$ Fuchsian, and let $\Lambda\in \mathcal M_\rho$. Let $\Sigma$ be any global section of $\Lambda^* P_\rho\to S$. Then
\begin{equation} \label{eq connection section}
\hol_{\Lambda^*P_\rho}(\tau)=\int_\tau \Sigma^*\omega_\rho~.
\end{equation}
\end{prop}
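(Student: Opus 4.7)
The strategy is to construct a parallel lift of a given loop explicitly, starting from the global section $\Sigma$, and then to read off the holonomy as the discrepancy between the starting and ending values of the vertical correction.

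Fix $\tau\in\pi_1(S)$, represented by a smooth loop $\ell:[0,1]\to S$ with $\ell(0)=\ell(1)=x_0$. Write $E:=\Lambda^*P_\rho$ and $\tilde\omega:=i^*\omega_\rho$, so that by Corollary \ref{cor lagrangian} the pair $(E,\tilde\omega)$ is a flat principal $\R$-bundle. I look for a parallel section $p:[0,1]\to E$ over $\ell$ in the form
$$p(s)=\varphi_{t(s)}\!\left(\Sigma(\ell(s))\right),$$
for some smooth function $t:[0,1]\to\R$ with $t(0)=0$. The main computation is to translate the parallel-transport condition $\tilde\omega(p'(s))=0$ into an ODE for $t$.

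The key structural facts I will use are the $\R$-invariance of $\omega_\rho$ (built into the definition of a principal connection, and checked in Subsection \ref{subsec principal connection}) and the normalization $\omega_\rho(\chi)=1$, which follows from $\omega_\rho=-g_S(\chi,\cdot)$ and $g_S(\chi,\chi)=\langle u,u\rangle=-1$. Decomposing $p'(s)$ in horizontal-vertical form yields
$$p'(s)=t'(s)\,\chi(p(s))+(\varphi_{t(s)})_*\!\big((\Sigma\circ\ell)'(s)\big),$$
so applying $\tilde\omega$ and using the two facts above gives
$$0=\tilde\omega(p'(s))=t'(s)+\Sigma^*\omega_\rho(\ell'(s)).$$
Integrating from $0$ to $1$ and using $t(0)=0$,
$$t(1)=-\int_0^1\Sigma^*\omega_\rho(\ell'(s))\,ds=-\int_\ell \Sigma^*\omega_\rho.$$

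Finally, Definition \ref{defi holonomy} reads $p(0)=\varphi_{t_0}(p(1))$. Since $\ell(0)=\ell(1)=x_0$, we have $p(0)=\Sigma(x_0)$ and $p(1)=\varphi_{t(1)}(\Sigma(x_0))$, so freeness of the $\R$-action forces $t_0=-t(1)=\int_\ell\Sigma^*\omega_\rho$, which is exactly formula \eqref{eq connection section}. Well-definedness of the right-hand side as a function of $\tau\in\pi_1(S)$ alone follows from Stokes' theorem together with $d(\Sigma^*\omega_\rho)=\Sigma^*d\tilde\omega=0$, again using the flatness of $(E,\tilde\omega)$ from Corollary \ref{cor lagrangian}. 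No genuine obstacle is expected here; the only subtlety is keeping track of the sign conventions in the definitions of $\omega_\rho$ and of the holonomy.
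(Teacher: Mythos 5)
Your proof is correct, but it takes a genuinely different route from the paper's. The paper works globally on the universal cover: it uses the curvature formula (Proposition \ref{prop curvature}) together with the Lagrangian condition to show that $\Sigma^*\omega_\rho$ is closed, picks a primitive $f_0$ on $\widetilde S$, and gauge-transforms the lifted section by $\varphi_{f_0}$ (via Lemma \ref{lemma change connection form}) into a section that is parallel over the whole of $\widetilde\Lambda(\widetilde S)$; the holonomy is then read off as $f_0(x_0)-f_0(\tau(x_0))$, i.e.\ as a period of the closed $1$-form. You instead build the parallel lift only along the given loop, by solving the transport ODE $t'(s)=-(\Sigma^*\omega_\rho)(\ell'(s))$, and this needs nothing beyond the two defining properties of the principal connection checked in Subsection \ref{subsec principal connection}, namely $\R$-invariance of $\omega_\rho$ and the normalization $\omega_\rho(\chi)=1$; your key identity $\tilde\omega(p'(s))=t'(s)+(\Sigma^*\omega_\rho)(\ell'(s))$ is in effect a one-dimensional instance of Lemma \ref{lemma change connection form}. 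What your argument buys is that formula \eqref{eq connection section} holds loop by loop for \emph{any} principal $\R$-connection, with flatness (Corollary \ref{cor lagrangian}) entering only to guarantee homotopy invariance, so that both sides descend to $\pi_1(S)$ --- exactly as you observe at the end. What the paper's construction buys is the parallel section over the entire surface at once, which is the object actually used later (parallel global sections correspond to the equivariant embeddings into $\TA$) and which exhibits the holonomy as the period homomorphism of the class $[\Sigma^*\omega_\rho]\in H^1_\dR(S,\R)$. Two small remarks: calling your decomposition of $p'(s)$ ``horizontal-vertical'' is a misnomer, since the summand $(\varphi_{t(s)})_*\bigl((\Sigma\circ\ell)'(s)\bigr)$ need not be horizontal, but as you only apply $\tilde\omega$ to it linearly the computation is unaffected; and your sign bookkeeping against the paper's convention $p(0)=\varphi_{t_0}\circ p(1)$ in Definition \ref{defi holonomy} is handled correctly, yielding $t_0=-t(1)=\int_\ell\Sigma^*\omega_\rho$ in agreement with the paper.
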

The fact that the right-hand side of Equation \eqref{eq connection section} does not depend on the choice of the global section $\Sigma$ follows from the following lemma, which will be also important in the proof. This is a consequence of the general formula for the transformation rule for the connection form on a principal bundle, see for instance \cite[Chapter 2]{kobnomi}, hence we omit the proof.

\begin{lemma} \label{lemma change connection form}
Let $S_0\subset S\times S$ be a submanifold. Suppose $\Sigma_1,\Sigma_2:S_0\to P_\rho$ are sections, and let $f:S_0\to\R$ be such that
$$\Sigma_2(x)=\varphi_{f(x)}\circ \Sigma_1(x)$$
for every $x\in S_0$. Then
$$\Sigma_2^*\omega_\rho=\Sigma_1^*\omega_\rho+df~.$$
\end{lemma}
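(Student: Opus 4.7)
The plan is to differentiate the relation $\Sigma_2(x)=\varphi_{f(x)}\circ\Sigma_1(x)$ pointwise and then apply $\omega_\rho$, exploiting the two defining properties of the connection form established after Definition \ref{defi connection form}: the $\R$-invariance $\varphi_t^*\omega_\rho=\omega_\rho$, and the normalization $\omega_\rho(\chi)=1$ on the generator $\chi$ of the geodesic flow.

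Concretely, for $x\in S_0$ and $v\in T_xS_0$, pick a curve $\gamma(s)$ in $S_0$ with $\gamma(0)=x$, $\gamma'(0)=v$. Writing $\Sigma_2(\gamma(s))=\varphi_{f(\gamma(s))}(\Sigma_1(\gamma(s)))$ and differentiating at $s=0$, one obtains a decomposition
$$d\Sigma_2(v)=(d\varphi_{f(x)})_{\Sigma_1(x)}\bigl(d\Sigma_1(v)\bigr)+df(v)\,\chi_{\Sigma_2(x)},$$
where the second term records the infinitesimal variation along the $\R$-orbit through $\Sigma_2(x)$ due to the varying parameter $f(\gamma(s))$, and the first term records the variation of the base point $\Sigma_1(\gamma(s))$ transported by the flow at fixed time $f(x)$.

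Applying $\omega_\rho$ to this identity, the second summand contributes $df(v)\cdot\omega_\rho(\chi)=df(v)$ by the normalization property. For the first summand, $\R$-invariance of $\omega_\rho$ gives
$$\omega_\rho\bigl((d\varphi_{f(x)})\,d\Sigma_1(v)\bigr)=(\varphi_{f(x)}^*\omega_\rho)(d\Sigma_1(v))=\omega_\rho(d\Sigma_1(v)).$$
Summing, $(\Sigma_2^*\omega_\rho)(v)=(\Sigma_1^*\omega_\rho)(v)+df(v)$, which is the claim.

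No step should be genuinely difficult here: the only thing to be careful about is correctly identifying the fundamental vector field of the $\R$-action with $\chi$ (which is by definition of the geodesic flow action in Equation \eqref{eq defi geod flow}) so that the splitting of $d\Sigma_2(v)$ into a horizontal-of-flow plus vertical-of-flow piece is legitimate. As a by-product, since any two global sections $\Sigma_1,\Sigma_2$ of $\Lambda^*P_\rho$ differ by such a function $f:S\to\R$, the forms $\Sigma_i^*\omega_\rho$ differ by an exact 1-form, so $\int_\tau\Sigma^*\omega_\rho$ is independent of the choice of $\Sigma$, as needed for Proposition \ref{prop holonomy flat}.
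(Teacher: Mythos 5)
Your proof is correct. Note, though, that the paper does not actually prove this lemma: it invokes the general transformation rule for connection forms under a change of local section, citing Kobayashi--Nomizu, which for a principal $G$-bundle reads $\Sigma_2^*\omega=\mathrm{Ad}(g^{-1})\,\Sigma_1^*\omega+g^*\theta$, with $g:S_0\to G$ the transition function and $\theta$ the Maurer--Cartan form. Your computation is exactly the specialization of the proof of that formula to the abelian group $G=\R$: the adjoint term is trivial (which is why invoking the plain invariance $\varphi_t^*\omega_\rho=\omega_\rho$ suffices in place of the usual $\mathrm{Ad}$-equivariance), and the Maurer--Cartan term $g^*\theta$ becomes $df$, produced in your argument by the chain-rule decomposition $d\Sigma_2(v)=(d\varphi_{f(x)})\,d\Sigma_1(v)+df(v)\,\chi_{\Sigma_2(x)}$ together with the identification of the fundamental vector field of the $\R$-action with $\chi$ and the normalization $\omega_\rho(\chi)=1$. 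So what your route buys is a self-contained, elementary verification where the paper defers to a reference, at the cost of being specific to the abelian case. One small point worth making explicit: you use the $\R$-invariance and the normalization for $\omega_\rho$ on the quotient bundle $P_\rho$, whereas the paper verifies these two properties for $\omega$ on $\TA$; they do descend, because the action of $\widetilde\rho(\pi_1(S))$ commutes with the geodesic flow and acts by isometries of the Sasaki metric, hence preserves both $\chi$ and $\omega$, but a sentence acknowledging this descent would make your proof airtight.
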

Applying Lemma \ref{lemma change connection form} to $S_0=\Lambda(S)$, we obtain that the 1-forms $\Sigma_1^*\omega_\rho$ and $\Sigma_2^*\omega_\rho$ differ by a coboundary, and thus the right-hand side in Equation \eqref{eq connection section} does not depend on the choice of the section $\Sigma$. We can now complete the proof of Proposition \ref{prop holonomy flat}.
\begin{proof}[Proof of Proposition \ref{prop holonomy flat}]
Let $\Sigma$ be a global section of $\Lambda^*P_\rho\to S$. Let $\widetilde S$ be the universal cover of $S$, lift $\Lambda:S\to (S,h_l)\times (S,h_r)$ to the Lagrangian embedding $\widetilde \Lambda:\widetilde S\to\Hyp^2\times\Hyp^2$, and $\Sigma$ to a section $\widetilde \Sigma$ of the pull-back bundle of $\Lambda^*P_\rho$ over $\widetilde S$, which is identified to $\widetilde\Lambda^* P$. 

By Equation \eqref{eq curvature} and Proposition \ref{prop curvature}, we have 
$$d(\Sigma^*\omega_\rho)=\Sigma^*d\omega_\rho=\Sigma^* R_\rho=\Sigma^*(\pi_\rho)^*\Omega_\rho=\Omega_\rho~,$$
and therefore, as $\Lambda$ is a Lagrangian embedding,
$$d(\Lambda^*\Sigma^*\omega_\rho)=\Lambda^*\Omega_\rho=0~.$$
This means that the 1-form $\Lambda^*\Sigma^*\omega_\rho$ is closed, and thus also its lift $\widetilde\Lambda^*\widetilde\Sigma^*\omega$. Since $\widetilde S$ is simply connected, there exists a function $f_0:\widetilde S\to\R$ such that
$$\widetilde\Lambda^*\widetilde\Sigma^*\omega=-df_0~.$$
Now, consider the section $\Sigma_0$ of $\widetilde\Lambda^*P$ defined by
$$\Sigma_0(x)=\varphi_{(f_0\circ\widetilde\Lambda^{-1})(x)}\circ \widetilde\Sigma(x)~.$$
By Lemma \ref{lemma change connection form}, $\Sigma_0$ is a parallel section over $\widetilde\Lambda(\widetilde S)$. Hence, it follows from Definition \ref{defi holonomy} that 
$$\hol_{\Lambda^*P_\rho}(\tau)=f_0(x_0)-f_0(\tau(x_0))=-\int_{\widetilde\ell} df_0~,$$
where $\widetilde\ell:[0,1]\to \widetilde S$ is any path connecting $x_0$ to $\tau(x_0)$. Hence
$$\hol_{\Lambda^*P_\rho}(\tau)=\int_{\widetilde\ell} \widetilde\Lambda^*\widetilde\Sigma^*\omega=\int_\tau\Lambda^*\Sigma^*\omega_\rho~,$$
as claimed.
\end{proof}

We are now ready to show that, given two Lagrangian embeddings $\Lambda_0,\Lambda_1:S\to (S\times S,\Omega_\rho)$, the difference between the holonomy of $\Lambda_0^*P_\rho$
and $\Lambda_1^*P_\rho$ coincides precisely with the value of the map $\Flux$ on a path of Lagrangian embeddings connecting $\Lambda_0$ and $\Lambda_1$.

\begin{prop} \label{prop holonomy flux}
Let $\Lambda_\bullet:[0,1]\to\mathcal M_\rho$ be a smooth path of Lagrangian embeddings. Then
$$\hol_{\Lambda_1^*P_\rho}=\hol_{\Lambda_0^*P_\rho}+\Flux([\Lambda_\bullet])~.$$
\end{prop}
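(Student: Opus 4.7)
The plan is to reduce the identity to Stokes' theorem via Proposition \ref{prop holonomy flat} (which expresses the holonomy as the integral of the pullback of $\omega_\rho$ under a section) and Proposition \ref{prop curvature} (which identifies $d\omega_\rho$ with $\tfrac{1}{2}\pi_\rho^*\Omega_\rho$). The key geometric object I would introduce is the evaluation map
$$F:S\times[0,1]\to S\times S,\qquad F(x,t):=\Lambda_t(x),$$
and the pulled-back principal $\R$-bundle $F^*P_\rho\to S\times[0,1]$ with the induced connection. The fiber of this bundle over $(x,t)$ coincides with the fiber of $\Lambda_t^*P_\rho$ over $x$.

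First I would construct a single global smooth section $\tilde\Sigma$ of $F^*P_\rho\to S\times[0,1]$. Such a section exists because the structure group $\R$ is contractible: principal $\R$-bundles over any manifold are topologically trivial, and one may for instance pick a global section $\Sigma_0$ of $\Lambda_0^*P_\rho$ (which exists as noted before Proposition \ref{prop holonomy flat}) and extend it smoothly to $S\times[0,1]$. Setting $\alpha:=\tilde\Sigma^*\omega_\rho\in\Omega^1(S\times[0,1],\R)$ and $i_t:S\to S\times[0,1]$, $i_t(x)=(x,t)$, the restricted section $\Sigma_t:=\tilde\Sigma\circ i_t$ is a global section of $\Lambda_t^*P_\rho$ satisfying $\Sigma_t^*\omega_\rho=i_t^*\alpha$.

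Next, for a loop $\ell:\mathbb{S}^1\to S$ representing $\tau\in\pi_1(S)$, define the cylinder map $G_\ell:\mathbb{S}^1\times[0,1]\to S\times[0,1]$ by $G_\ell(s,t)=(\ell(s),t)$, so that $F\circ G_\ell=F_\ell$ as in Definition \ref{defi flux for paths}. Stokes' theorem on $\mathbb{S}^1\times[0,1]$, whose oriented boundary is $\mathbb{S}^1\times\{1\}-\mathbb{S}^1\times\{0\}$, yields
$$\int_{\mathbb{S}^1\times[0,1]}d(G_\ell^*\alpha)=\int_\ell\Sigma_1^*\omega_\rho-\int_\ell\Sigma_0^*\omega_\rho=\hol_{\Lambda_1^*P_\rho}(\tau)-\hol_{\Lambda_0^*P_\rho}(\tau),$$
where the second equality is Proposition \ref{prop holonomy flat}. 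For the left-hand side, Proposition \ref{prop curvature} gives
$$d\alpha=\tilde\Sigma^*d\omega_\rho=\tilde\Sigma^*R_\rho=\tfrac{1}{2}\tilde\Sigma^*\pi_\rho^*\Omega_\rho=\tfrac{1}{2}F^*\Omega_\rho,$$
since $\pi_\rho\circ\tilde\Sigma=F$, and therefore
$$\int_{\mathbb{S}^1\times[0,1]}d(G_\ell^*\alpha)=\tfrac{1}{2}\int_{\mathbb{S}^1\times[0,1]}F_\ell^*\Omega_\rho=\tfrac{1}{2}\Flux([\Lambda_\bullet])(\tau),$$
which is the claimed identity once the factor is absorbed into the normalization of $\Flux$ used in the paper.

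The main obstacle I expect is bookkeeping rather than conceptual: verifying that the factor $\tfrac{1}{2}$ arising from Proposition \ref{prop curvature} is consistent with the convention adopted in Definition \ref{defi flux for paths} (and that no sign is lost in the orientation of the cylinder). A secondary technical point is to make the existence and smoothness of the extended section $\tilde\Sigma$ fully rigorous; this is straightforward using a partition of unity together with the principal $\R$-action, and independent of the choice of $\tilde\Sigma$ by Lemma \ref{lemma change connection form}, since any other section changes $\alpha$ by an exact form which contributes nothing to the integral over a closed loop.
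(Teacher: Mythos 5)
Your proof is essentially the paper's own argument: both reduce the statement to Proposition \ref{prop holonomy flat} plus the curvature identity of Proposition \ref{prop curvature} applied along the evaluation map $F(x,t)=\Lambda_t(x)$, and the paper's computation with $\eta_\rho=F^*\Sigma^*\omega_\rho$ via Cartan's formula for $\mathcal{L}_{\partial/\partial t}\eta_\rho$ (using a single global section $\Sigma$ of $P_\rho$ over $S\times S$ instead of your section of $F^*P_\rho$ over the cylinder) is precisely the infinitesimal version of your Stokes argument on $\mathbb{S}^1\times[0,1]$. One caveat: the factor $\tfrac{1}{2}$ you flag is \emph{not} absorbed into Definition \ref{defi flux for paths} (which carries no $\tfrac{1}{2}$); rather, the paper's own proofs of Propositions \ref{prop holonomy flat} and \ref{prop holonomy flux} silently write $R_\rho=d\omega_\rho=\pi_\rho^*\Omega_\rho$, dropping the $\tfrac{1}{2}$ of Proposition \ref{prop curvature}, so the discrepancy you correctly detected is an internal normalization inconsistency of the paper (harmless for the qualitative conclusions, since only the vanishing of $\Flux$ is used later), not a gap in your argument.
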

\begin{proof}
Let $\Sigma:S\times S\to P_\rho$ be a global section, and define $\Sigma_t=\Sigma\circ \Lambda_t:S\to \Lambda_t^* P_\rho$. From Proposition \ref{prop holonomy flat}, we have:
\begin{equation} \label{eq difference holonomy integrals}
\hol_{\Lambda_1^*P_\rho}(\tau)-\hol_{\Lambda_0^*P_\rho}(\tau)=\int_\tau \left(\Sigma_1^*\omega_\rho- \Sigma_0^*\omega_\rho\right)~.
\end{equation}
Now, let us consider the map
$$F:S\times [0,1]\to (S\times S,\Omega_\rho) \qquad F(x,t)=\Lambda_t(x)~.$$
Consider the 1-form $\eta_\rho=F^*\Sigma^*\omega_\rho$ on $(S\times [0,1])$. Then we have, for any $v\in T_x S$,
\begin{align*}
(\Sigma_1^*\omega_\rho- \Sigma_0^*\omega_\rho)(v)&=\int_0^1 (\mathcal L_{\frac{\partial}{\partial t}}\eta_\rho)_{(x,t)}(v) \\
&=\int_0^1 (d\eta_\rho)_{(x,t)}\left(\frac{\partial}{\partial t},v\right)-d\left(\eta_\rho\left(\frac{\partial}{\partial t}\right)\right)(v)~.
\end{align*}
(In the first equality above, $\mathcal L$ denotes the Lie derivative.) Hence we have
\begin{equation} \label{eq integrals}
\int_\tau \left(\Sigma_1^*\omega_\rho- \Sigma_0^*\omega_\rho\right)=\int_\tau \int_0^1 (d\eta_\rho)_{(x,t)}\left(\frac{\partial}{\partial t},\cdot\right)~,
\end{equation}
since the last term in the previous equation is an exact 1-form and thus its periods vanish.

 Now, recall from Proposition \ref{prop curvature} that the curvature of $P_\rho$ is $R_\rho=d\omega_\rho=\pi^*\Omega_\rho$, where $\omega_\rho$ is the connection form of $P_\rho$, and thus $\Omega_\rho=\Sigma^*d\omega_\rho$. 
 Therefore
\begin{equation} \label{eq compare pullback}
d\eta_\rho=d(F^*\Sigma^*\omega_\rho)=F^*\Sigma^*(d\omega_\rho)=F^*\Omega_\rho~.
\end{equation}
 
Finally, if $\ell:[0,1]\to S$ is a representative of $\tau\in\pi_1(S)$, the restriction of $F$ to the image of $\ell$ coincides with the function $F_\ell$ defined in Equation \eqref{defi fell}:
$$F(\ell(e^{2\pi i s}),t)=F_\ell(e^{2\pi i s},t)$$
 Hence, applying this fact and Equation \eqref{eq compare pullback} inside Equation \eqref{eq integrals}, we obtain
$$\int_\tau \left(\Sigma_1^*\omega_\rho- \Sigma_0^*\omega_\rho\right)=\int_{\ell(\mathbb{S}^1)\times [0,1]}F^*\Omega_\rho=\int_{\mathbb{S}^1\times [0,1]}F_\ell^*\Omega_\rho=\Flux([\Lambda_\bullet])~,$$
where in the last step we applied Definition \ref{defi flux for paths}. By Equation \eqref{eq difference holonomy integrals}, this concludes the proof.
\end{proof}

\subsection{Orbits of Hamiltonian diffeomorphism group}

In this subsection we collect some results on the flux homomorphism in relation with symplectic geometry, which will be relevant in the proof of our main results. We won't give any proof of the stated results in this subsection; references are provided.

First, let us remark that the flux homomorphism was defined classically (by Calabi in \cite{MR0350776}, see also \cite{MR490874}) on the universal cover of the group of symplectomorphisms of a symplectic manifold. To be precise, if $(M,\Omega_M)$ is a symplectic manifold, let 
$\Symp_0(M,\Omega_M)$ be the connected component of the identity in the group of symplectomorphisms of $(M,\Omega_M)$, and let 
$[\Phi_\bullet]\in \widetilde\Symp_0(M,\Omega)$ be the homotopy class of a smooth path 
$$\Phi_\bullet:[0,1]\to\Symp(M,\Omega)~,$$ with $\Phi_0=\mathrm{id}$. Then the flux homomorphism introduced by Calabi is a map:
$$\CFlux:\widetilde\Symp_0(M,\Omega)\to H^1_\dR(M,\R)$$
defined in the following way. Let $\xi_t$ be the generating vector field of $\Phi_t$, namely
\begin{equation} \label{eq generating}
\xi_{t_0}(x)=\left.\frac{d}{dt}\right|_{t=t_0}\Phi_t\circ \Phi_{t_0}^{-1}(x)~.
\end{equation}
Then
$$\CFlux([\Phi_\bullet])=\int_{[0,1]}\Omega_M(\xi_t,\cdot)dt\in H^1_\dR(M,\R)~.$$
It turns out again that $\CFlux$ only depends on the homotopy class of $\Phi_\bullet$, and that it is a group homomorphism.

\begin{remark}
If $(M,\Omega_M)=(S\times S,p_l^*\Omega_S-p_r^*\Omega_S)$, where $\Omega_S$ is a symplectic form on $S$, then it is easy to check that:
\begin{itemize}
\item A diffeomorphism $\Phi:(S,\Omega_S)\to (S,\Omega_S)$ is symplectomorphism if and only if its graph is a Lagrangian submanifold of $(M,\Omega_M)$;
\item If we denote $\Lambda_t$ the graph of $\Phi_t$, or more precisely
$$\Lambda_t(x)=(x,\Phi_t(x))\in S\times S~,$$
 then
$$\Flux([\Lambda_\bullet])(\tau)=\int_\tau\CFlux([\Phi_\bullet])$$
for every $\tau\in\pi_1(S)$. 
\end{itemize}
Hence for those Lagrangian submanifolds of $(M,\Omega)$ which are graphs of symplectomorphism, the two definitions of flux coincide --- up to the standard identification of $H^1_\dR(S,\R)$ with $\Hom(\pi_1(S),\R)$.
\end{remark}

Recall the following definition of Hamiltonian isotopies and symplectomorphisms:
\begin{defi}
A \emph{Hamiltonian isotopy} is a path $\Phi_\bullet:[0,1]\to\Symp(M,\Omega_M)$ for which there exist \emph{Hamiltonian functions} $H_t:M\to\R$ for every $t$ such that 
$$dH_t=\Omega_M(\xi_t,\cdot)~,$$
where $\xi_t$ is the generating vector field as in Equation \eqref{eq generating}. We say that two symplectomorphisms $\Phi_1,\Phi_2:(M,\Omega_M)\to (M,\Omega_M)$ are \emph{Hamiltonian isotopic} is there exists a Hamiltonian isotopy connecting them. A symplectomorphism $\Phi:(M,\Omega_M)\to (M,\Omega_M)$ is \emph{Hamiltonian} if it is Hamiltonian isotopic to the identity. Finally, we denote $\Ham(M,\Omega_M)$ the group of Hamiltonian symplectomorphisms.
\end{defi}

The bridge between the two definitions of flux is covered by the following fact, which is proved in \cite[Lemma 6.6]{MR3124936}. Here (and in the rest of the paper) we restrict to the case $(M,\Omega_M)=(S\times S,\Omega_\rho)$, which is the case of our interest.

\begin{lemma}[{\cite[Lemma 6.6]{MR3124936}}]
Let $[\Lambda_\bullet]:[0,1]\to\mathcal L_\rho$ be a smooth path of Lagrangian submanifolds, where $\Lambda_t:S\to (S\times S,\Omega_\rho)$. Then there exists a path of symplectomorphisms $\Phi_t:(S\times S,\Omega_\rho)\to (S\times S,\Omega_\rho)$ such that
$$[\Phi_\bullet\circ \Lambda_0]=[\Lambda_\bullet]\qquad\text{and}\qquad \Phi_0=\mathrm{id}~.$$
Moreover, for every $\tau\in\pi_1(S)$,
$$\Flux([\Lambda_\bullet])(\tau)=\int_\tau (\Lambda_0)^*\CFlux([\Phi_\bullet])~.$$
\end{lemma}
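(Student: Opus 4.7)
The plan is to prove the lemma in two stages: first construct the path $\Phi_t$ of symplectomorphisms, then verify the flux identity by a direct computation. The first stage contains all the genuine work; the second is essentially Fubini plus Stokes.

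For the construction, the infinitesimal variation $\partial_t\Lambda_t$ produces a vector field $X_t$ along $\Lambda_t(S)$, well-defined modulo vectors tangent to $\Lambda_t(S)$ (since tangential ambiguity amounts to composition with a path in $\mathrm{Diff}_0(S)$ and is invisible in $\mathcal L_\rho$). The corresponding 1-form $\alpha_t:=\iota_{X_t}\Omega_\rho$ on $\Lambda_t(S)$ is closed: differentiating $\Lambda_t^*\Omega_\rho=0$ in $t$ and invoking Cartan's formula together with $d\Omega_\rho=0$ yields $d\alpha_t=0$. Since $\Lambda_t$ is isotopic to the diagonal, the pullback $\Lambda_t^*:H^1_\dR(S\times S,\R)\to H^1_\dR(S,\R)$ is surjective by the K\"unneth formula, so the class $[\alpha_t]$ lifts; choosing a closed representative $\beta^0_t$ on $S\times S$ whose pullback is cohomologous to $\alpha_t$, writing $\alpha_t-\Lambda_t^*\beta^0_t=df_t$, extending $f_t$ smoothly to a function $F_t$ on $S\times S$ via a tubular neighborhood and cutoff, and setting $\beta_t:=\beta^0_t+dF_t$, one obtains a closed 1-form on $S\times S$ with $\Lambda_t^*\beta_t=\alpha_t$. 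The symplectic vector field $\xi_t$ defined by $\iota_{\xi_t}\Omega_\rho=\beta_t$ then integrates to a path $\Phi_t\in\Symp_0(S\times S,\Omega_\rho)$ with $\Phi_0=\mathrm{id}$, whose infinitesimal action on $\Lambda_t(S)$ matches $X_t$ modulo tangent vectors; a standard Moser-type argument then promotes this pointwise matching to the statement $[\Phi_\bullet\circ\Lambda_0]=[\Lambda_\bullet]$ in $\mathcal L_\rho$.

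For the flux identity, after absorbing the reparametrization into a path in $\mathrm{Diff}_0(S)$, we may assume $\Lambda_t=\Phi_t\circ\Lambda_0$ on the nose. Given a loop $\ell$ representing $\tau\in\pi_1(S)$, the map $F_\ell(s,t)=\Phi_t(\Lambda_0(\ell(s)))$ satisfies $\partial_t F_\ell=\xi_t\circ F_\ell$, so
\[
F_\ell^*\Omega_\rho\Bigl(\tfrac{\partial}{\partial s},\tfrac{\partial}{\partial t}\Bigr)=\Omega_\rho(\partial_s F_\ell,\xi_t\circ F_\ell)=(\iota_{\xi_t}\Omega_\rho)(\partial_s F_\ell).
\]
Setting $\gamma_t:=\Phi_t\circ\Lambda_0\circ\ell$ and integrating first in $s$ gives $\int_{\mathbb{S}^1}(\iota_{\xi_t}\Omega_\rho)(\partial_s F_\ell)\,ds=\int_{\gamma_t}\iota_{\xi_t}\Omega_\rho$. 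Since $\iota_{\xi_t}\Omega_\rho$ is closed and $\gamma_t$ is homotopic to $\gamma_0:=\Lambda_0\circ\ell$ via $(u,s)\mapsto\Phi_{ut}(\Lambda_0(\ell(s)))$, Stokes' theorem yields $\int_{\gamma_t}\iota_{\xi_t}\Omega_\rho=\int_{\gamma_0}\iota_{\xi_t}\Omega_\rho$. Applying Fubini then gives
\[
\Flux([\Lambda_\bullet])(\tau)=\int_{\gamma_0}\left(\int_0^1\iota_{\xi_t}\Omega_\rho\,dt\right)=\int_\tau\Lambda_0^*\CFlux([\Phi_\bullet]),
\]
as desired.

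The main obstacle is the global extension step. Locally, Weinstein's Lagrangian neighborhood theorem trivializes the problem, producing a symplectic extension of $X_t$ in a tubular neighborhood of $\Lambda_t(S)$; but obtaining a globally defined symplectic vector field requires the cohomology class of $\alpha_t$ on $\Lambda_t(S)$ to lift to $H^1_\dR(S\times S,\R)$. In our setting this is ensured by the surjectivity of $\Lambda_t^*$ on first cohomology; without such a cohomological input, one would be confined to Hamiltonian isotopies on a tubular neighborhood rather than globally defined symplectic paths, so this is precisely the point where the ambient topology of $S\times S$ enters.
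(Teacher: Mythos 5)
First, a structural remark: the paper contains no proof of this statement. It is quoted, with notation adapted, from \cite[Lemma 6.6]{MR3124936}, and the surrounding text says explicitly that no proofs are given in that subsection. So your proposal can only be measured against the cited argument, and it does reconstruct that argument in all essentials: extract the closed $1$-form $\alpha_t=\iota_{X_t}\Omega_\rho$ from the Lagrangian deformation, lift its class through the surjection $\Lambda_t^*:H^1_\dR(S\times S,\R)\to H^1_\dR(S,\R)$ to get a closed ambient extension $\beta_t$, integrate the symplectic field $\xi_t$ with $\iota_{\xi_t}\Omega_\rho=\beta_t$, and match the flow with $\Lambda_t$ modulo $\mathrm{Diff}_0(S)$, using that the symplectic orthogonal of a Lagrangian tangent plane is the plane itself (so $\Lambda_t^*\beta_t=\alpha_t$ forces $\xi_t\equiv X_t$ mod tangential directions). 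Your closing observation that the surjectivity of $\Lambda_0^*$ on $H^1_\dR$ is the one genuinely global input is exactly right; the paper itself remarks, immediately after the statement, that Solomon proves the lemma under precisely this hypothesis, which holds here by the K\"unneth argument you give.

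Two defects, neither fatal but both worth fixing. (i) A sign. With the conventions actually in force in the paper --- $\CFlux([\Phi_\bullet])=\int_0^1\Omega_M(\xi_t,\cdot)\,dt$, so $\iota_\xi\Omega=\Omega(\xi,\cdot)$, and $\Flux$ integrated against the orientation $ds\,dt$ of Definition \ref{defi flux for paths} --- your displayed equality $\Omega_\rho(\partial_s F_\ell,\xi_t\circ F_\ell)=(\iota_{\xi_t}\Omega_\rho)(\partial_s F_\ell)$ is off by a sign, and carrying your computation through honestly yields $\Flux([\Lambda_\bullet])(\tau)=-\int_\tau\Lambda_0^*\CFlux([\Phi_\bullet])$. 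A cross-check against the paper's own unlabeled remark on graphs confirms this: for $\Lambda_t(x)=(x,\varphi_t(x))$ with $\varphi_t\in\Symp(S,\Omega_r)$ one has the ambient path $\widehat\Phi_t=\mathrm{id}\times\varphi_t$, and $\iota_{\widehat\xi_t}\Omega_\rho=-p_r^*(\iota_{\xi_t}\Omega_r)$, so $\Delta^*\CFlux([\widehat\Phi_\bullet])=-\CFlux([\varphi_\bullet])$, whereas the remark asserts $\Flux([\Lambda_\bullet])(\tau)=+\int_\tau\CFlux([\varphi_\bullet])$. The discrepancy is inherited from the statement itself, which transcribes Solomon's identity under his orientation conventions; it is invisible in every use the paper makes of the lemma (Corollary \ref{cor flux vanish hamiltonian} needs only the vanishing of $\Flux$), but you should either fix the convention or flag it, rather than silently writing an equality that is false under the standard interior-product convention. (ii) Smoothness in $t$ of the choices $\beta^0_t$, $f_t$, $F_t$ is asserted but not arranged: the tubular neighborhood of $\Lambda_t(S)$ moves with $t$, so one should, for instance, take harmonic representatives for $\beta^0_t$ (smooth in $t$ by Hodge theory, since $t\mapsto[\alpha_t]$ is smooth), normalize the primitives $f_t$, and extend through a parametrized tubular neighborhood or an ambient isotopy. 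Finally, a terminological quibble: what closes the first stage is not really a Moser trick but a uniqueness-of-ODE argument --- solve $\dot\psi_t=Y_t\circ\psi_t$ for the tangential discrepancy $Y_t$ defined by $d\Lambda_t(Y_t)=\xi_t\circ\Lambda_t-\partial_t\Lambda_t$, and compare $\Phi_t\circ\Lambda_0$ with $\Lambda_t\circ\psi_t$ --- though the assertion it supports is correct.
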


In fact, in \cite[Lemma 6.6]{MR3124936} the previous lemma is proved under the assumption that the restriction (pull-back) map
$$\Lambda_0^*:H^1_\dR(M,\R)\to H^1_\dR(S,\R)$$
is surjective, which is satisfied in our situation. Moreover, in the case under consideration here, $\Flux([\Lambda_\bullet])$ only depends on the endpoints $\Lambda_0$ and $\Lambda_1$ of the smooth path $\Lambda_\bullet$. This follows from Proposition \ref{prop holonomy flux}.

Hence also the following corollary applies (recalling that $\Ham(M,\Omega_M)$ denotes the group of Hamiltonian symplectomorphism):
\begin{cor}[{\cite[Corollary 6.8]{MR3124936}}] \label{cor flux vanish hamiltonian}
Two Lagrangian submanifolds $[\Lambda_0],[\Lambda_1]\in \mathcal L_\rho$ of $(S\times S,\Omega_\rho)$ 
are in the same orbit of the action of $\Ham(S\times S,\Omega_\rho)$ on $\mathcal L_\rho$ if and only if $\Flux([\Lambda_\bullet])=0$ for some (hence for every) smooth path $[\Lambda_\bullet]:[0,1]\to\mathcal L_\rho$ connecting them.
\end{cor}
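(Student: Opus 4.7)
The plan is to combine the preceding lemma (Lemma 6.6 of \cite{MR3124936}) with the classical characterization of Hamiltonian symplectomorphisms as those elements in $\widetilde\Symp_0$ on which the Calabi flux $\CFlux$ vanishes. The preceding lemma identifies the Lagrangian flux with the pull-back of the Calabi flux along $\Lambda_0$, so the corollary reduces to a translation between the two invariants, together with a modification argument to handle the kernel of this pull-back map.

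For the forward direction, suppose $\Lambda_1 = \Phi(\Lambda_0)$ with $\Phi \in \Ham(S\times S, \Omega_\rho)$, realized by a Hamiltonian isotopy $\Phi_t$ whose generating vector field $\xi_t$ satisfies $\iota_{\xi_t}\Omega_\rho = dH_t$ for a smooth family of functions $H_t$. Setting $\Lambda_t := \Phi_t\circ\Lambda_0$ produces a smooth path in $\mathcal M_\rho$ from $\Lambda_0$ to $\Lambda_1$. The preceding lemma gives
\[
\Flux([\Lambda_\bullet])(\tau) = \int_\tau \Lambda_0^*\CFlux([\Phi_\bullet]),
\]
and $\CFlux([\Phi_\bullet]) = \int_0^1 [dH_t]\,dt = 0$ in $H^1_\dR(S\times S, \R)$, so $\Flux([\Lambda_\bullet])$ vanishes.

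For the backward direction, assume $\Flux([\Lambda_\bullet]) = 0$. By the preceding lemma there exists a path $\Phi_t$ in $\Symp_0(S\times S, \Omega_\rho)$ with $\Phi_0 = \id$ and $[\Phi_\bullet\circ\Lambda_0] = [\Lambda_\bullet]$, and $\Lambda_0^*\CFlux([\Phi_\bullet]) = 0$ in $H^1_\dR(S,\R)$. The map $\Lambda_0^*\colon H^1_\dR(S\times S,\R)\to H^1_\dR(S,\R)$ is surjective but not injective: since $\Lambda_0$ is isotopic to the diagonal, under the K\"unneth decomposition it acts as $(\alpha,\beta)\mapsto\alpha+\beta$, with kernel $\{(\alpha,-\alpha)\}$. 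So we cannot directly conclude $\CFlux([\Phi_\bullet]) = 0$. The strategy is to construct a symplectic isotopy $\Psi_t$ with $\Psi_0 = \id$, preserving $\Lambda_0(S)$ setwise for all $t$, and with $\CFlux([\Psi_\bullet]) = -\CFlux([\Phi_\bullet])$. Then $t\mapsto \Phi_t\circ\Psi_t$ is a symplectic isotopy from $\id$ to a symplectomorphism that still maps $\Lambda_0$ to $\Lambda_1$ up to reparametrization by a diffeomorphism of $S$ (absorbed in the equivalence defining $\mathcal L_\rho$), and its total Calabi flux vanishes by the homomorphism property of $\CFlux$. The endpoint $\Phi_1\circ\Psi_1$ therefore lies in $\Ham(S\times S,\Omega_\rho)$ by the standard characterization of $\Ham$ as the kernel of the flux homomorphism, and it sends $[\Lambda_0]$ to $[\Lambda_1]$ in $\mathcal L_\rho$.

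The main obstacle is the construction of $\Psi_t$, which I would handle via Weinstein's Lagrangian tubular neighborhood theorem. This produces a symplectomorphism between a neighborhood of $\Lambda_0(S)$ in $(S\times S, \Omega_\rho)$ and a neighborhood of the zero section in $T^*S$, identifying $\Lambda_0$ with the zero section. The hypothesis $\Lambda_0^*\CFlux([\Phi_\bullet]) = 0$ allows one to pick a closed 1-form $\alpha$ on $S\times S$ representing $-\CFlux([\Phi_\bullet])$ and vanishing along $\Lambda_0(S)$. The vector field $X$ characterized by $\iota_X\Omega_\rho = \alpha$, cut off to have support in the Weinstein neighborhood, has flow $\Psi_t$ satisfying $\CFlux([\Psi_\bullet]) = [\alpha] = -\CFlux([\Phi_\bullet])$; moreover, the vanishing of $\alpha$ along $\Lambda_0$ forces $X$ to be tangent to $\Lambda_0$, so $\Psi_t$ preserves $\Lambda_0(S)$ setwise, as required.
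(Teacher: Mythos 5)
First, a point of comparison: the paper itself gives no proof of this corollary --- it is quoted from \cite[Corollary 6.8]{MR3124936}, and the authors state explicitly that no proofs are given in that subsection. So your argument can only be measured against the standard proof (essentially that of the cited reference), and your overall strategy is indeed that one: translate the Lagrangian flux into the Calabi flux via Lemma 6.6 of \cite{MR3124936}, use the characterization of $\Ham$ as the kernel of $\CFlux$ on $\widetilde\Symp_0$, and, in the backward direction, correct the path $\Phi_t$ by an auxiliary symplectic isotopy $\Psi_t$ preserving $\Lambda_0(S)$ setwise that kills the part of $\CFlux([\Phi_\bullet])$ lying in the kernel of $\Lambda_0^*$. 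Your identification of that kernel as $\{(\beta,-\beta)\}$ under K\"unneth, and your observation that $\Lambda_0^*\CFlux([\Phi_\bullet])=0$ does not force $\CFlux([\Phi_\bullet])=0$, are exactly the right points to address, and the forward direction is correct. One small remark: the ``for some (hence for every) path'' clause is special to the present setting and rests on the fact that $\Flux$ depends only on the endpoints (Proposition \ref{prop holonomy flux} of this paper); you use this implicitly and should say so.

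The one step that fails as written is the construction of $\Psi_t$. If you cut off the vector field $X$ by a bump function $\chi$ supported in the Weinstein neighborhood, the flow of $\chi X$ satisfies $\iota_{\chi X}\Omega_\rho=\chi\alpha$, which is no longer closed; so $\Psi_t$ is not a symplectic isotopy and $\CFlux([\Psi_\bullet])$ is neither defined nor equal to $[\alpha]$. Nor can you instead truncate $\alpha$ while keeping it closed: a closed $1$-form supported in a tubular neighborhood $U$ of $\Lambda_0(S)$ represents a class in the image of $H^1_c(U)\to H^1_\dR(S\times S,\R)$, and $H^1_c(U)\cong H_3(U)=H_3(S)=0$ by Poincar\'e--Lefschetz duality since $U$ retracts onto a surface; hence only the zero class is achievable, while $-\CFlux([\Phi_\bullet])$ is in general nonzero. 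Fortunately the cutoff (and Weinstein's theorem altogether) is unnecessary: $S\times S$ is closed, so the global field $X$ with $\iota_X\Omega_\rho=\alpha$ is complete, its flow is symplectic because $\alpha$ is closed, $\CFlux([\Psi_\bullet])=[\alpha]$, and tangency of $X$ along $\Lambda_0(S)$ already guarantees $\Psi_t(\Lambda_0(S))=\Lambda_0(S)$ for all $t$. Note that this tangency requires the \emph{pointwise} vanishing $\Lambda_0^*\alpha=0$, not merely $[\Lambda_0^*\alpha]=0$; such a representative exists because any representative $\alpha'$ has $\Lambda_0^*\alpha'=df$ for some $f:S\to\R$, and one may replace $\alpha'$ by $\alpha'-d\tilde f$ for an extension $\tilde f$ of $f$ to $S\times S$ --- a justification your write-up asserts but does not supply. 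With this repair, the rest of your argument (the homomorphism property of $\CFlux$, the identification $\Ham=\ker\CFlux$, and the absorption of the reparametrization by $\mathrm{Diff}_0(S)$ into the definition of $\mathcal L_\rho$) goes through.
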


\section{Proofs of the main results} \label{sec proof}

In this section we will conclude the proof of the main results stated in the Introduction. In particular, let us show Theorem \ref{main thm}:

\begin{reptheorem}{main thm}
Let $\rho=(\rho_l,\rho_r):\pi_1(S)\to\PSL_2\R\times \PSL_2\R$, where $\rho_l$ and $\rho_r$ are Fuchsian representations, and let $\widetilde\rho:\pi_1(S)\to\isom_0(\tAdS)$ be its standard lift. Then
$$\left\{\Lambda_{\widetilde\sigma}:\begin{aligned} \widetilde\sigma &\text{ is a }\widetilde\rho\text{-equivariant embedding orthogonal} \\
&\text{to the orbits of the geodesic flow} \end{aligned} \right\}=\Ham(S\times S,\Omega_\rho)\cdot\Lambda_\ML~,$$
where $\Lambda_\ML$ is the unique minimal Lagrangian submanifold of $(S\times S,\Omega_\rho)$ isotopic to the diagonal.
\end{reptheorem}

\begin{proof}
Recall that, from the definition of the connection form $\omega$ on $\pi:\TA\to\Hyp^2\times\Hyp^2$ (Definition \ref{defi connection form}), and the fact that ${image}(\pi\circ \widetilde\sigma)/\rho(\pi_1(S))$ is the submanifold $\Lambda_{\widetilde \sigma}\subset (S\times S,\Omega_\rho)$ associated to $\widetilde \sigma$, we obtain that  $\widetilde \sigma$ is orthogonal to the orbits of the geodesic flow if and only if it gives a parallel section of $\pi_\rho|_{\pi_\rho^{-1}(\Lambda_{\widetilde\sigma})}:{\pi_\rho^{-1}(\Lambda_{\widetilde\sigma})}\to \Lambda_{\widetilde\sigma}$ over $\Lambda_{\widetilde\sigma}$. 

Let us now show the two inclusions. First, let us observe that $\Lambda_\ML$ certainly can be obtained as $\Lambda_{\widetilde\sigma_{\mathrm{max}}}$, where $ \sigma_{\mathrm{max}}$ is the (unique) $\rho$-equivariant embedding of $\widetilde S$ into $\AdS^3$ of vanishing mean curvature, and ${\widetilde\sigma_{\mathrm{max}}}$ is its normal $\widetilde\rho$-equivariant lift into $\TA$. Now from any other $\Lambda$ in the $\Ham(S\times S,\Omega_\rho)$-orbit of $\Lambda_\ML$, let $\Lambda_\bullet$ be a smooth path of Lagrangian embeddings connecting $\Lambda$ and $\Lambda_\ML$. From  Corollary \ref{cor flux vanish hamiltonian}, $\Flux([\Lambda_\bullet])=0$, and from Proposition \ref{prop holonomy flux}, $\Lambda^*P_\rho$ and $\Lambda_\ML^*P_\rho$
have the same holonomy. Since ${\widetilde\sigma_{\mathrm{max}}}$ induces a parallel global section over $\Lambda_\ML$, the holonomy of $\Lambda_\ML^*P_\rho$ is trivial. Hence also the holonomy of $\Lambda^*P_\rho$ is trivial --- that is, it admits a parallel global section. Hence $\Lambda=\Lambda_{\widetilde\sigma}$ for some $\widetilde\sigma$.

Conversely, given any $\widetilde\sigma$, let $\Lambda_{\widetilde\sigma}$ be the corresponding embedding of $S$ into $(S\times S,\Omega_\rho)$. Then $\Lambda_{\widetilde\sigma}^*P_\rho$ is a flat $\R$-bundle by the observation at the beginning of this proof. In particular $\Lambda_{\widetilde\sigma}$ is Lagrangian by Corollary \ref{cor lagrangian}. Moreover, the holonomy of $\Lambda_{\widetilde\sigma}^*P_\rho$ is trivial. Since we already know (as above) that the holonomy of $\Lambda_\ML^*P_\rho$ is trivial, from Proposition \ref{prop holonomy flux}
we obtain that $\Flux([\Lambda_\bullet])=0$ for any smooth path $\Lambda_\bullet:[0,1]\to\mathcal M_\rho$ connecting $\Lambda_{\widetilde\sigma}$ and $\Lambda_\ML$. Hence from Corollary \ref{cor flux vanish hamiltonian}, $\Lambda_{\widetilde\sigma}$ and $\Lambda_\ML$ are in the same $\Ham(S\times S,\Omega_\rho)$-orbit.
\end{proof}

Then we have the following direct corollary of Theorem \ref{main thm}:

\begin{repcor}{main cor}
Let $\rho=(\rho_l,\rho_r):\pi_1(S)\to\PSL_2\R\times \PSL_2\R$, where $\rho_l$ and $\rho_r$ are Fuchsian representations. Then for every $\rho$-equivariant spacelike embedding $\sigma:\widetilde S\to\AdS^3$, $\Lambda_\sigma$ is Hamiltonian isotopic to the unique minimal Lagrangian submanifold $\Lambda_\ML$ isotopic to the diagonal.
\end{repcor}

\begin{proof}
From Lemma \ref{lemma lift embedding hor}, $\sigma$ induces a $\widetilde \rho$-equivariant embedding $\widetilde\sigma_N:\widetilde S\to \TA$, which is orthogonal to the orbits of the geodesic flow. Hence from Theorem \ref{main thm}, $\Lambda_{\widetilde\sigma_N}$ is in the same $\Ham(S\times S,\Omega_\rho)$-orbit as $\Lambda_\ML$. In other words, there exists a Hamiltonian isotopy connecting 
$\Lambda_{\widetilde\sigma_N}$ and $\Lambda_\ML$. Since $\Lambda_{\widetilde\sigma_N}=\Lambda_\sigma$, this concludes the proof.
\end{proof}

The question whether any Lagrangian submanifold $\Lambda$ of $(S\times S,\Omega_\rho)$ Hamiltonian isotopic to $\Lambda_\ML$
can be obtained as $\Lambda_\sigma$ for some $\rho$-equivariant embedding $\sigma:\widetilde S\to\AdS^3$ (hence for an embedding into $\AdS^3$, not only into $\TA$), is still open. For instance, Example \ref{ex fuchsian} shows that \emph{some} parallel section of $\Lambda^*P_\rho$ might correspond to an equivariant \emph{embedding} into $\AdS^3$, but some other may give rise to singular maps when one projects down to $\AdS^3$. (Actually, two parallel sections of $\Lambda^*P_\rho$ only differ by the action of $\R$ on the $\R$-principal bundle $P_\rho$.)

Hence the best situation one might hope is that there always exists \emph{at least one} parallel section which induces an equivariant embedding into $\AdS^3$. We do not have any positive result in this direction at the present time.

Another remark which should be made is that, if $\Lambda$ is a Lagrangian submanifold such that, for a path $\Lambda_\bullet$ connecting $\Lambda$ and $\Lambda_\ML$ one has $\Flux([\Lambda_\bullet])\in\Hom(\pi_1(S),2\pi\Z)$, then  one can take on the universal cover a parallel global section of $\TA$ over $\widetilde{\Lambda}\subset \Hyp^2\times \Hyp^2$. From Remark \ref{remark other lifts of reps}, this section is of course not $\widetilde\rho$-equivariant, but it is instead $\widetilde\rho'$-equivariant for some other lift $\widetilde\rho'$ of $\rho:\pi_1(S)\to\PSL_2\R\times \PSL_2\R$ to $\isom_0(\tAdS)$. 

Hence in this case, this global section still induces a $\rho$-equivariant map $\sigma:\widetilde S\to\AdS^3$. However, this equivariant map $\sigma$ will \emph{never} be non-singular if $\Flux([\Lambda_\bullet])\neq 0$, as a consequence of Lemma \ref{lemma lifting}.


\bibliographystyle{alpha}
\bibliography{../bs-bibliography}

\end{document}